\definecolor{ourcolor}{RGB}{0,102,204}
\theoremstyle{plain}
\newtheorem{teorema}{Teorema}[section]
\newtheorem{lemma}[teorema]{Lemma}
\theoremstyle{definition}
\theoremstyle{remark}
\newcommand{\R}{\mathbb{R}}
\newcommand{\N}{\mathbb{N}}
 \def\@seccntformat#1{\hspace*{0mm}%
  \protect\textup{\protect\@secnumfont
    \ifnum\pdfstrcmp{subsection}{#1}=0 \bfseries\fi
    \csname the#1\endcsname
    \protect\@secnumpunct
      }%
 }
\newcommand\II{I{\kern-1pt}I}
\newcommand\bdf{\begin{definition}}
\newcommand\bpr{\begin{proposition}}
\newcommand\brk{\begin{remark}}
\newcommand\blm{\begin{lemma}}
\newcommand\bexe{\begin{exercise}}
\newcommand\bexa{\begin{example}}
\newcommand\beqn{\begin{eqnarray*}}
\newcommand\edf{\end{definition}}
\newcommand\epr{\end{proposition}}
\newcommand\erk{\end{remark}}
\newcommand\elm{\end{lemma}}
\newcommand\eexe{\end{exercise}}
\newcommand\eexa{\end{example}}
\newcommand\eeqn{\end{eqnarray*}}
\numberwithin{equation}{section}
\theoremstyle{plain}
\newtheorem{theorem}{Theorem}[section]
\newtheorem{corollary}[theorem]{Corollary}
\newtheorem{proposition}[theorem]{Proposition}
\newtheorem{example}[theorem]{Example}
\theoremstyle{definition}
\newtheorem{definition}[theorem]{Definition}
\theoremstyle{remark}
\newtheorem{remark}[theorem]{Remark}
\newcommand{\veps}{\varepsilon}
\newcommand{\supp}{\mathop{\rm supp}\nolimits}   
\renewcommand{\d}{{\mathrm d}}
\newcommand{\restr}[1]{\lower3pt\hbox{$|_{#1}$}}
\newcommand{\eps}{\varepsilon}
\newcommand{\nchi}{{\raise.3ex\hbox{$\chi$}}}
\begin{document}

 \title[Necessary and Sufficient Conditions for the MS Formula]{Necessary and Sufficient Conditions for the Maz\textquotesingle ya--Shaposhnikova Formula in (Fractional) Sobolev Spaces}

\author[E. Davoli]{Elisa Davoli}
\address{Institute of Analysis and Scientific Computing, TU Wien, Wiedner Hauptstraße 8-10, 1040 Vienna, Austria.}
\email{elisa.davoli@tuwien.ac.at}

\author[G. Di Fratta]{Giovanni Di Fratta}
\address{Dipartimento di Matematica e Applicazioni “R. Caccioppoli”, Università degli Studi di Napoli “Federico II”, Via Cintia, Complesso Monte S. Angelo, 80126 Naples, Italy.}
\email{giovanni.difratta@unina.it}

\author[R. Giorgio]
{Rossella Giorgio}
\address{Institute of Analysis and Scientific Computing, TU Wien, Wiedner Hauptstraße 8-10, 1040 Vienna, Austria and MedUni Wien, Währinger Gürtel 18-20, 1090 Vienna, Austria.}
\email{rossella.giorgio@student.tuwien.ac.at}

\author[A.~Pinamonti]{Andrea Pinamonti}
\address{Dipartimento di Matematica\\ Università di Trento\\
Via Sommarive, 14, 38123 Povo TN, Italy}
\email{andrea.pinamonti@unitn.it}

\begin{abstract}
We investigate the asymptotic behavior, as $\varepsilon
\rightarrow 0$, of nonlocal functionals
\[ \mathcal{F}_{\varepsilon} (u) = \iint_{\R^N \times \R^N} \rho_{\varepsilon}
   (y - x) \, | u (x) - u (y) |^p \hspace{0.17em}  \mathrm{d} x \,
   \mathrm{d} y, \quad u \in L^p ( \R^N ), \quad 1 \leqslant p <
   \infty, \]
associated with a general family of non-negative measurable kernels $\{ \rho_{\varepsilon}
\}_{\varepsilon > 0}$. Our primary aim is to single out the weakest
moment-type assumptions on the family of kernels $\{ \rho_{\varepsilon}
\}_{\varepsilon > 0}$ that are necessary and sufficient for the pointwise
convergence
\[ \underset{\varepsilon \rightarrow 0}{\lim } \mathcal{F}_{\varepsilon} (u) =
   2 \|u\|^p_{L^p}  \]
to hold for every $u$ in a prescribed subspace of $L^p (\R^N)$. In the
canonical smooth regime of compactly supported functions ($u \in
C_c^{\infty} ( \R^N )$), we show that convergence occurs when two optimal conditions are satisfied: (i) a mass escape condition, and (ii) a short-range attenuation effect, expressed by the vanishing as $\varepsilon \to 0$ of the kernels $p$-moments in any fixed neighborhood of the origin.
This general framework recovers the classical Maz’ya–Shaposhnikova theorem for fractional-type kernels and extends the convergence result to a much broader class of interaction profiles, which may be non-symmetric and non-homogeneous. Furthermore, using a density argument that preserves the moment assumptions, we prove that the same necessary and sufficient conditions remain valid in the integer-order Sobolev setting ($u \in W^{1, p} (\R^N)$). Finally, by adapting the method to fractional Sobolev spaces $W^{s, p} (\R^N)$ with $s \in (0, 1)$, we recover the Maz’ya–Shaposhnikova formula and extend it under analogous abstract conditions on the family of kernels $\{ \rho_{\varepsilon} \}_{\varepsilon > 0}$.
\end{abstract}

\keywords{Nonlocal functionals; Maz'ya--Shaposhnikova formula; Fractional Sobolev spaces; Kernel moment conditions; Nonlocal-to-local limits.}
\subjclass[2020]{Primary: 46E35, 26A33; Secondary: 46B20.}

\maketitle

\section{Introduction}
Given a family of non-negative kernels $\{\rho_{\varepsilon}\}_{\varepsilon > 0}$, where $\rho_{\varepsilon} : \R^N \to [0,+\infty)$, we consider the associated \emph{nonlocal Dirichlet energies} by  
\[
\mathcal{F}_{\varepsilon}(u) := \int_{\R^N \times \R^N} \rho_{\varepsilon}(y - x) \, |u(x) - u(y)|^p \, \mathrm{d}x \, \mathrm{d}y, \quad u \in L^p(\R^N),
\]
for $1 \leqslant p < \infty$ and $N \in \mathbb{N}$.  
These functionals arise in a wide range of applications, including L\'evy-type stochastic processes, peridynamic models in mechanics, image processing, and the study of fractional Sobolev norms.

In their seminal work~\cite{BBM}, Bourgain, Brezis, and Mironescu showed that for the family of \emph{fractional kernels}
\begin{equation}
  \label{eq:fractional-kernel-BBM} 
  \rho_{\varepsilon}(z) = (1 - \varepsilon) |z|^{- (N + \varepsilon p)}, 
  \qquad \varepsilon \in (0, 1),
\end{equation}
there exists $K_{p,N}>0$ such that
\begin{equation}
\label{eq:BBMFirst}
  \lim_{\varepsilon \to 1^{-}} \mathcal{F}_{\varepsilon}(u) = K_{p,N} \|\nabla u\|_{L^p}^p,
\end{equation}
for every $u \in W^{1,p}(\R^N)$.  
One year later, Maz'ya and Shaposhnikova in ~\cite{MS}, completed the picture by proving that for the family of fractional kernels
\begin{equation}
  \label{eq:fractional-kernel} 
  \rho_{\varepsilon}(z) = \frac{\varepsilon p}{|\mathbb{S}^{N - 1}|} |z|^{- (N + \varepsilon p)}, 
  \qquad \varepsilon \in (0, 1),
\end{equation}
one has
\begin{equation}
  \label{eq:MSFirst}
  \lim_{\varepsilon \to 0} \mathcal{F}_{\varepsilon}(u) = 2 \|u\|_{L^p}^p,
\end{equation}
for every $u \in W^{s,p}(\R^N)$, with $s \in (0, 1)$.  
The limit identity~\eqref{eq:MSFirst} is now widely known as the \emph{Maz'ya--Shaposhnikova (MS) formula}.

Various extensions of the BBM and MS formulas in Euclidean spaces have been
developed, yielding a large and diverse literature. We do not attempt to provide an exhaustive survey; instead, we highlight contributions most closely related to {\cite{BBM,MS}}.

A \(\Gamma\)-convergence result that interprets Sobolev and BV norms as limits of nonlocal integral functionals was established in \cite{Ponce} (see also \cite{Cri}). Integral and relaxation characterizations of Sobolev and BV
spaces, together with lower semicontinuity results for families of nonlocal
functionals converging to Sobolev norms were developed in
{\cite{MR2832587,MR3132740}}. The asymptotic behavior of the fractional
$s$-perimeter in the regime $s \searrow 0$ is analyzed in {\cite{MR3007726}},
where conditions and examples are provided that clarify when the limit exists
and how it relates to volume-type quantities.
A sustained line of work has extended BBM/MS-type asymptotics across the full
scale of fractional Sobolev spaces $W^{s, p}$, carefully treating both the $s
\nearrow 1$ (BBM-type) and the $s \searrow 0$ (MS-type) regimes and
addressing endpoint and integrability subtleties; representative works in this
direction include
{\cite{MR3485124,MR3556344,MR3749763,MR4011115,MR4275122,MR4482042,MR4525722}}.
Fourier-analytic and function-space methods (notably in Triebel--Lizorkin and related endpoint spaces) offer alternative --- and in
some cases sharper --- descriptions of fractional norms and delicate endpoint
limits; see for instance {\cite{MR4275122,MR4482042,MR4525722}}.
Magnetic
variants of the BBM/MS formulas have also been developed: several results
demonstrate that magnetically weighted Gagliardo-type seminorms converge to
the appropriate local magnetic energies in the corresponding limits
{\cite{MR3601583,Ngu,MR3975602}}. Within Micromagnetics, a BBM-type formula incorporating nonlocal antisymmetric exchange interactions (Dzyaloshinskii–Moriya interaction, DMI) has been established in \cite{DDFG24} (see also \cite{DFGL25} for further results concerning physically relevant kernels).
Other generalizations focus on geometric and growth flexibility. BBM/MS-type
formulas have been proved in sub-Riemannian and Carnot-type settings and in various metric-measure frameworks by exploiting homogeneous structures,
tangent analysis, or volume-growth and Bishop--Gromov--type controls;
Orlicz--Sobolev extensions that accommodate non-power growth have also been
investigated {\cite{MR4449863,MR4453966,MR4685023,MSPin}}. Further directions
encompass vector- and matrix-valued nonlocal functionals with applications to
elasticity and homogenization {\cite{MR3886626,MR3986928}}, refined asymptotic
expansions for special kernel classes such as radial kernels
{\cite{Fog,Fog1}}, and a BBM-type representation for functions of bounded
deformation (BD) that links nonlocal energies to symmetric gradients
{\cite{MR4507709}}.

Over the past three decades, significant progress has been made in first-order analysis on metric measure spaces, including the development of first-order Sobolev spaces, functions of bounded variation, and their connections with variational problems and partial differential equations; see, for instance, \cite{AT-T} and the references therein. In \cite[Remark 6]{MR1942116}, Brezis raised a question concerning the relationship between the BBM formula and Sobolev spaces in the setting of metric measure spaces. This issue was subsequently addressed in \cite{DMS19}, where new characterizations of Sobolev and BV spaces in PI spaces were provided, inspired by the BBM framework; see also \cite{LPZ1,MR4788002}. Related results had previously been obtained in Ahlfors-regular spaces \cite{MR3299669}, and further investigations were carried out in certain PI spaces locally resembling Euclidean spaces \cite{MR4375837}. More recently, \cite{MR4782147} showed that the essential assumptions for obtaining \eqref{eq:MSFirst} and \eqref{eq:BBMFirst} are Rademacher’s theorem and volume growth at infinity. Finally, \cite{MSPin} established a surprising link between the MS formula and the generalized Bishop–Gromov inequality in the framework of metric measure spaces.

Despite the central role of the power-law kernels \eqref{eq:fractional-kernel-BBM} and \eqref{eq:fractional-kernel}, many contemporary applications demand \emph{greater flexibility} in the choice of interaction kernels --- for instance, compactly supported or anisotropic kernels, or kernels exhibiting singular behavior different from the classical power-law regime.
Motivated by these needs, several works have investigated broader kernel classes and identified conditions under which BBM/MS-type limits continue to hold; see, in particular, the complete characterization of kernels for which \eqref{eq:BBMFirst} is valid in \cite{Davoli} (for $p=2$) and in \cite{Stefani} (for the remaining cases), as well as the radial-kernel analyses in \cite{Fog1,Fog}.

This naturally raises the following question.
{\medskip}

{\noindent}{\textbf{Kernel generality.}} \emph{For a given subspace $X^p (\R^N)  \subseteq L^p (\R^N)$, which families of
kernels $\{\rho_{\varepsilon} \}_{\varepsilon > 0}$ ensure that
\eqref{eq:MSFirst} holds for every $u \in X^p (\R^N) $}? \medskip

In this paper, we offer a systematic and sharp answer to that question. Our main contributions can be summarized as follows.
\begin{itemize}
  \item \textbf{Exact kernel criterion in the smooth setting
  (Theorem~\ref{thm:MS-smooth}).} We show that for arbitrary non-negative,
  measurable kernels $\rho_{\varepsilon}$, the convergence
  $\mathcal{F}_{\varepsilon} (u) \to 2 \hspace{0.17em} \|u\|_{L^p}^p$ for all
  $u \in C^{\infty}_c (\R^N)$ is governed by two simple moment-type conditions on the kernels: a mass-escape condition and a short-range attenuation effect, expressed through the vanishing of suitably rescaled $p$-moments near the origin (see \eqref{eq:kernels-R_fix} or \eqref{eq:kernels-R_lim}). This formulation both unifies and extends the classical fractional-kernel case.\medskip
  
  \item \textbf{Extension to $W^{1, p} (\R^N)$
  (Corollary~\ref{cor:MS-sobolev}).} Remarkably, using a density argument that preserves the moment hypotheses, we show that the very same kernel conditions are necessary and sufficient for the convergence  $\mathcal{F}_{\varepsilon} (u) \to 2 \hspace{0.17em} \|u\|_{L^p}^p$ on the whole Sobolev space $W^{1, p} (\R^N)$. In other words, passing from compactly supported smooth functions to the Sobolev class $W^{1, p} (\R^N)$ does not require any stronger assumptions on the kernels.\medskip
  
  \item \textbf{Fractional Sobolev regime (Theorem~\ref{thm:MS-frac}).} By suitably adapting our moment assumptions to the weaker $W^{s,p}(\R^N)$ framework with $s\in(0,1)$, we recover the Maz’ya--Shaposhnikova formula for a broad class of kernels that includes the classical fractional family. This yields a genuine extension of the fractional MS result. Furthermore, Theorem~\ref{3-Thm2} shows that, under an additional hypothesis on the admissible kernels $\{\rho_{\varepsilon} \}_{\varepsilon > 0}$, the conditions of Theorem~\ref{thm:MS-frac} are not merely sufficient but also necessary for a Maz’ya–Shaposhnikova–type identity.\medskip
\end{itemize}
Beyond these main results, we develop several tools of independent interest and a detailed discussion of the limitations arising in the fractional case, which highlights an open problem about the necessity of our kernel assumptions (see Remark~\ref{rmk:genuinegen}).

\subsection*{Applications} Although our study is purely analytic, the two kernel properties we single out are tailored to capture several mechanisms relevant in applications. In peridynamics and nonlocal elasticity, one often encounters compactly supported finite-horizon kernels whose effective interaction length may vary with a parameter; such kernels are covered by the mass–escape condition when their support drifts to infinity (see, e.g., \cite{Du2012,Nezza2012,Silling2000}. In probability and materials models, long-tailed (Lévy-type) kernels are prototypical examples of mass concentration at infinity and satisfy our hypotheses as in the classical fractional case~\cite{Applebaum2009}. Finally, in image processing, anisotropic or non-homogeneous kernels are vital for tasks like image denoising \cite{Buades, Gilboa2009, Maleki2013}.

\subsection*{Structure of the paper.} Section~\ref{sec:2} states our principal results and places them in the context of the existing literature. There, we treat convergence both in the smooth setting and in integer-order Sobolev spaces, proving Theorem~\ref{thm:MS-smooth} and Corollary~\ref{cor:MS-sobolev}. Section~\ref{sec:FS} then addresses the fractional Sobolev case: in Theorem~\ref{thm:MS-frac} we provide a sufficient condition for the validity of the Maz'ya--Shaposhnikova formula, while in Theorem~\ref{3-Thm2} we prove that, under an additional assumption on the admissible kernels $\{\rho_{\varepsilon}\}_{\varepsilon>0}$, these conditions are not only sufficient but also necessary for a Maz'ya--Shaposhnikova-type formula to hold.

\section{The MS formula in the smooth and integer-order Sobolev
settings}\label{sec:2}
Let $p \in [1,  \infty)$ and $N \in \N$. Consider a family of nonnegative measurable kernels $\{\rho_{\varepsilon} : \R^N \to [0,
\infty)\}_{\varepsilon > 0}$. For every $\varepsilon > 0$ and every measurable
function $u$, we define the associated nonlocal energy functional
\begin{equation}
  \label{energy} \mathcal{F}_{\varepsilon} (u):= \int_{\R^N
  \times \R^N} \rho_{\varepsilon}  (y - x) \, |u (x) - u (y) |^p
  \d x \hspace{0.17em} \d y,
\end{equation}
with the convention that the above energy takes infinite values whenever $u$ does not belong to its domain:
\begin{equation}\label{eq:NL-functional}
\mathcal{X}^p_{\eps} (\R^N):= \{u \in L^p (\R^N) \,:\,
  \mathcal{F}_{\varepsilon} (u) < + \infty\}.
\end{equation}
Given $\varepsilon_0>0$, we further introduce the function space:
\begin{equation}
  \label{f-space} 
  X^p_{\eps_0} (\R^N)
 := \bigcap_{\eps \leqslant \eps_0} \mathcal{X}^p_{\eps} (\R^N) .
\end{equation}
Observe that, without further assumptions on the kernels $\rho_{\varepsilon}$,
the spaces $\mathcal{X}_{\varepsilon}^p (\R^N)$ need not be nested as $\varepsilon$
varies, whereas by construction the intersections $X_{\varepsilon_0}^p (\R^N)$
form a decreasing family in $\varepsilon_0$.

Depending on the regularity of the function $u$, we establish a generalized
version of the Maz'ya--Shaposhnikova formula {\cite{MS}}.

\subsection{The MS formula in the smooth settings}\label{subsec2.1}Our first
main result concerns functions $u \in C^{\infty}_c (\R^N)$.

\begin{theorem}
  \label{thm:MS-smooth}The following three conditions are equivalent:
  \begin{enumerate}
    \item \label{cond:R-fix}{\textbf{{\emph{Uniform moment conditions.}}}}
    {\noindent}For every fixed radius $R > 0$, as $\varepsilon \to 0$
    the kernels $\{ \rho_{\varepsilon} \}_{\varepsilon > 0}$ satisfy
    \begin{equation}
      \label{eq:kernels-R_fix} \lim_{\varepsilon \to 0}  \int_{|z| > R}
      \rho_{\varepsilon} (z) \hspace{0.17em} \d z = 1 \quad \text{and}
      \quad \lim_{\varepsilon \to 0}  \int_{|z| < R} |z|^p \rho_{\varepsilon}
      (z) \hspace{0.17em} \d z = 0.
    \end{equation}
    \item \label{cond:R-lim}{\textbf{{\emph{Iterated limits conditions.}}}}
    {\noindent}The double limits in the order ``first $\varepsilon \rightarrow
    0$, then $R \rightarrow \infty$'' satisfy
    \begin{equation}
      \label{eq:kernels-R_lim} \lim_{R \to \infty} \lim_{\varepsilon \to 0} 
      \int_{|z| > R} \rho_{\varepsilon} (z) \hspace{0.17em} \d z = 1
      \quad \text{and} \quad \lim_{R \to \infty} \lim_{\varepsilon \to 0} 
      \int_{|z| < R} |z|^p \rho_{\varepsilon} (z) \hspace{0.17em} \d z
      = 0.
    \end{equation}
    \item
    \label{eq:MS+II}\label{cond:R-lim3}{\textbf{{\emph{Maz'ya--Shaposhnikova
    formula.}}}} {\noindent}For every smooth, compactly supported function $u
    \in C^{\infty}_c (\R^N)$, $\mathcal{F}_{\varepsilon} (u)$ is well-defined
    for $\varepsilon > 0$ sufficiently small, and
    \begin{equation}
      \label{eq:MS-smooth} \lim_{\varepsilon \to 0} \mathcal{F}_{\varepsilon}
      (u) = 2 \|u\|^p_{L^p (\R^N)} .
    \end{equation}
    Moreover, for each fixed $R > 0$, there holds
    \begin{equation}\label{condex}
      \lim_{\varepsilon \to 0}  \int_{|z| < R} \int_{\R^N}
      \rho_{\varepsilon} (z)  |u (x + z) - u (x) |^p \hspace{0.17em}
      \d x \hspace{0.17em} \d z = 0.
    \end{equation}
  \end{enumerate}
\end{theorem}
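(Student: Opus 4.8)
The plan is to establish the equivalences through the two chains $(1)\Leftrightarrow(2)$ and $(1)\Leftrightarrow(3)$. The implication $(1)\Rightarrow(2)$ is immediate: under $(1)$ the inner limits $\lim_{\varepsilon\to0}\int_{|z|>R}\rho_\varepsilon(z)\,\d z=1$ and $\lim_{\varepsilon\to0}\int_{|z|<R}|z|^p\rho_\varepsilon(z)\,\d z=0$ do not depend on $R$, so they survive the outer passage $R\to\infty$. For $(2)\Rightarrow(1)$ I would exploit monotonicity in $R$: for every fixed $\varepsilon$ the map $R\mapsto\int_{|z|>R}\rho_\varepsilon$ is non-increasing and $R\mapsto\int_{|z|<R}|z|^p\rho_\varepsilon$ is non-decreasing, so their limits $g(R)$ and $h(R)$ as $\varepsilon\to0$ are, respectively, non-increasing and non-decreasing. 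Since $h\geqslant0$ is non-decreasing with $h(R)\to0$, necessarily $h\equiv0$, which is the moment part of $(1)$. For the mass part, the Chebyshev-type bound $\int_{R_1<|z|<R_2}\rho_\varepsilon\leqslant R_1^{-p}\int_{|z|<R_2}|z|^p\rho_\varepsilon$ yields, after letting $\varepsilon\to0$, that $g(R_1)=g(R_2)$ for all $0<R_1<R_2$; hence $g$ is constant and equal to $\lim_{R\to\infty}g(R)=1$.

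For $(1)\Rightarrow(3)$, I would pass to the translation variable $z=y-x$ and write $\mathcal{F}_\varepsilon(u)=\int_{\R^N}\rho_\varepsilon(z)\,\Phi_u(z)\,\d z$, where $\Phi_u(z):=\int_{\R^N}|u(x+z)-u(x)|^p\,\d x$. The key are three elementary properties of $\Phi_u$ for $u\in C^\infty_c(\R^N)$ with $\supp u\subseteq\overline{B_M}$: first, $\Phi_u(z)\leqslant|z|^p\|\nabla u\|_{L^p}^p$, via the fundamental theorem of calculus along the segment joining $x$ to $x+z$ and Minkowski's integral inequality; second, $\Phi_u(z)\leqslant2^p\|u\|_{L^p}^p$ always; third, $\Phi_u(z)=2\|u\|_{L^p}^p$ whenever $\supp u$ and $\supp u+z$ are disjoint, in particular whenever $|z|>2M$. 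Splitting the integral at a radius $R\geqslant2M$, the near part obeys $\int_{|z|<R}\rho_\varepsilon(z)\Phi_u(z)\,\d z\leqslant\|\nabla u\|_{L^p}^p\int_{|z|<R}|z|^p\rho_\varepsilon(z)\,\d z\to0$ by the moment hypothesis, which is exactly \eqref{condex} (for an arbitrary radius the same bound with $|z|^p$ gives \eqref{condex} directly), while the far part equals $2\|u\|_{L^p}^p\int_{|z|>R}\rho_\varepsilon(z)\,\d z\to2\|u\|_{L^p}^p$ by the mass-escape hypothesis; this proves \eqref{eq:MS-smooth}, and combining the two crude bounds shows $\mathcal{F}_\varepsilon(u)<\infty$ for $\varepsilon$ small.

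The delicate direction is $(3)\Rightarrow(1)$, since the bare identity \eqref{eq:MS-smooth} does not by itself disentangle the near-origin and far-field mass of $\rho_\varepsilon$; the supplementary vanishing \eqref{condex} is what makes this possible, through a careful choice of test functions. To recover the moment condition at a fixed radius $R>0$, I would test \eqref{condex} with $u=u_i\in C^\infty_c(\R^N)$ chosen so that $u_i\equiv x_i$ on $B_{2R}$, for $i=1,\dots,N$: then for $|z|<R$ and $x\in B_R$ one has $u_i(x+z)-u_i(x)=z_i$, so $\Phi_{u_i}(z)\geqslant|z_i|^p|B_R|$, and \eqref{condex} forces $\int_{|z|<R}|z_i|^p\rho_\varepsilon(z)\,\d z\to0$; summing over $i$ and using $|z|^p\leqslant N^{p/2}\sum_i|z_i|^p$ gives the moment condition. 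To recover the mass condition at a fixed $R>0$, I would pick any $u\in C^\infty_c(\R^N)\setminus\{0\}$ with $\supp u\subseteq\overline{B_{R/2}}$; then $\Phi_u(z)=2\|u\|_{L^p}^p$ for all $|z|>R$, so $2\|u\|_{L^p}^p\int_{|z|>R}\rho_\varepsilon(z)\,\d z=\mathcal{F}_\varepsilon(u)-\int_{|z|<R}\rho_\varepsilon(z)\Phi_u(z)\,\d z\to2\|u\|_{L^p}^p$ by \eqref{eq:MS-smooth} and \eqref{condex}, whence $\int_{|z|>R}\rho_\varepsilon(z)\,\d z\to1$. I expect the main obstacle to be precisely this last step: choosing, for each of the two moment quantities, test functions whose difference quotient $\Phi_u$ is comparable to (respectively, eventually equal to) the quantity one wishes to extract, while keeping in mind that \eqref{condex} is an input here, not a consequence of \eqref{eq:MS-smooth} alone.
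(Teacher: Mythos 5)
Your proposal is correct, but it departs from the paper's argument in two substantive ways that are worth spotlighting.

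First, the logical structure differs. The paper proves the cyclic chain $(1)\Rightarrow(2)\Rightarrow(3)\Rightarrow(1)$, so that $(2)\Rightarrow(1)$ is only obtained indirectly (through the MS formula). You instead prove $(2)\Rightarrow(1)$ head-on via monotonicity of $h(R)=\lim_\varepsilon\int_{|z|<R}|z|^p\rho_\varepsilon$ plus the Chebyshev bound $\int_{R_1<|z|<R_2}\rho_\varepsilon\leqslant R_1^{-p}\int_{|z|<R_2}|z|^p\rho_\varepsilon$. This is a clean, self-contained argument which the paper does not spell out; one should just note that the existence of the inner limits for all $R$ (not merely large $R$) needs the monotone-in-$R$ squeeze you mention, which it supplies.

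Second, and more interestingly, your proof of the moment half of $(3)\Rightarrow(1)$ replaces the paper's key Lemma~\ref{lemma:(ii)-smooth} with a single family of test functions. The paper splits $\{|z|<R\}$ into a small range $\{|z|<\delta\}$ (handled by a radial bump and a first-order Taylor expansion of $\nabla u$, which introduces a quantitative remainder $\delta^p\|\nabla^2 u\|_\infty^p$ that must be absorbed by choosing $\delta$ small) and an intermediate range $\{\delta<|z|<R\}$ (handled by a second test function with tiny support). You instead take $u_i\in C_c^\infty(\R^N)$ with $u_i\equiv x_i$ on $B_{2R}$, for which the first-order difference $u_i(x+z)-u_i(x)=z_i$ is \emph{exact} on $B_R\times B_R$, so $\Phi_{u_i}(z)\geqslant|B_R|\,|z_i|^p$ for all $|z|<R$ with no remainder and no splitting; summing over $i$ and using the equivalence of $\ell^p$ and $\ell^2$ norms on $\R^N$ closes the argument in one stroke. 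This is genuinely simpler and avoids the two-region decomposition entirely. The trade-off is that the paper's Lemma~\ref{lemma:(ii)-smooth} is stated as a standalone equivalence and reused later (e.g., in Corollary~\ref{rmk:kernelsamearea}), whereas your shortcut is tailored to this theorem; but for the purposes of Theorem~\ref{thm:MS-smooth} your route is both correct and more economical. The remaining pieces — $(1)\Rightarrow(3)$ via the split at $R\geqslant 2M$, the exact identity $\Phi_u(z)=2\|u\|_{L^p}^p$ for $|z|$ large, and the mass half of $(3)\Rightarrow(1)$ with $\supp u\subseteq\overline{B_{R/2}}$ — coincide with what the paper does.
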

  Theorem~\ref{thm:MS-smooth} both characterizes and extends the Maz'ya–Shaposhnikova framework: it gives a precise description of those kernels whose associated  {\emph{nonlocal}} Dirichlet energies converge to the {\emph{local}} $L^p$-norm in the regime where only long-range interactions remain relevant.
  The proof is presented in Section~\ref{proof:thmMssmooth}; before that we collect a few remarks and illustrative examples.

\begin{remark}[Finite L\'evy measures]
\label{lemma:inclusion-smooth}
We preliminarily observe that, for sufficiently small $\varepsilon$, each
kernel $\rho_{\varepsilon}$ fulfilling the fixed--radius moment conditions \eqref{eq:kernels-R_fix}  is a finite Lévy measure, i.e.
  \[ \int_{\R^N} \min \{ 1, | z |^p \}  \rho_{\varepsilon}
     (z) \, \d z \, < \, + \infty.\]
     We refer the reader to \cite{Fog} for a complete overview of the subject.
In particular, the first claim in
item~(\ref{eq:MS+II}) is automatically satisfied, for instance, by \cite[Proposition 3.12]{Fog}. In fact, by  conditions \eqref{eq:kernels-R_fix}, there exist $\varepsilon_0 > 0$ and $c_{\rho} > 0$, such that 
\[ \int_{\R^N} \min \{ 1, | z |^p \} \, \rho_{\varepsilon} 
(z) \, \d z < c_{\rho} \quad \text{for~all} \quad 0 < \varepsilon 
\leqslant \varepsilon_0 . \] 
Let $u \in W^{1, p} (\R^N)$ and $z\in\mathbb{R}^N$. We denote by $\tau_z u (x) := u (x 
+ z)$ the translation of $u$ by $z$. Standard Sobolev estimates yield, for every $z \in 
\R^N$, 
\[ \| \tau_z u - u \|_{L^p}^p \, \leqslant \, 2^p \| u 
\|_{L^p}^p, \qquad \| \tau_z u - u \|_{L^p}^p \leqslant | z |^p \| \nabla 
u \|_{L^p}^p. \] 
Hence 
\[ \| \tau_z u - u \|_{L^p}^p \leqslant 2^p \, \min \{ 1, | z 
|^p \} \, \| u \|_{W^{1, p}}^p . \] 
Integrating this estimate against $\rho_{\varepsilon} (z)$ entails 
\[ \mathcal{F}_{\varepsilon} (u) \leqslant 2^p\| u \|_{W^{1, p}}^p \int_{\R^N} 
\min \{1, |z|^p \} \rho_{\varepsilon} (z) \hspace{0.17em} \d z < 
c_{\rho} \| u \|_{W^{1, p}}^p . \] 
Thus $u \in \mathcal{X}_{\varepsilon}^p (\R^N)$ for every $\varepsilon 
\leqslant \varepsilon_0$, and the inclusion $W^{1, p} (\R^N) 
\subset X_{\varepsilon_0}^p (\R^N)$ follows. 
\end{remark}

\begin{remark}[On the uniform moment conditions]
We refer to the two conditions in \eqref{eq:kernels-R_fix}, which ensure the validity of the generalized Maz'ya--Shaposhnikova formula in Theorem \ref{thm:MS-smooth}, as the \textit{mass-escape condition} and the \textit{short-range attenuation effect}. 
The contribution of the nonlocal energy \eqref{eq:NL-functional} that yields the MS formula comes from regions far away from the origin. For this reason, in the case of functions with support on the whole space $\R^N$, it is essential to work with kernels that, in the limit, concentrate their mass at infinity. This motivates the first condition in \eqref{eq:kernels-R_fix}, referred to as \textit{mass-escape condition}.  

On the other hand, the second condition in \eqref{eq:kernels-R_fix}, instead, specifies how singular the kernel can be near the origin. It also requires that the short-range contribution vanish in the limit, ensuring that the total mass is concentrated away from the origin—hence the term \textit{short-range attenuation}. Note, however, that the kernel is not necessarily singular at the origin a priori. For instance, the kernel
\begin{equation*}
        \rho_{\varepsilon}(z) :=  
        \begin{cases} 
            \varepsilon  & \text{if } z \in B_1(0), \\[6pt] 
            \dfrac{\varepsilon p}{\mid \mathbb{S}^{N - 1} \mid} \, |z|^{-(N + \varepsilon p)} & \text{if } z \in \R^N \setminus B_1(0). 
        \end{cases} 
\end{equation*}
satisfies the conditions \eqref{eq:kernels-R_fix} and therefore guarantees the validity of the MS formula. Further examples on kernels are provided in the Remark below.
\end{remark}

\begin{remark}[A few examples of kernels satisfying \eqref{eq:kernels-R_fix}]\label{Rmk:examples}
We collect below some families of kernels complying with \eqref{eq:kernels-R_fix} (equivalently, with \eqref{cond:R-lim}).
\begin{enumerate}
  \item Canonical examples are the {\emph{fractional kernels}} $\rho_{\varepsilon}
  (z) := \frac{\varepsilon p}{\mid \mathbb{S}^{N - 1} \mid} \, |
  z |^{- (N + \varepsilon p)}$, which indeed satisfy for each fixed $R > 0$
  \[ \int_{\mid z \mid > R} \rho_{\varepsilon} (z) \, \d z = R^{-
     \varepsilon p} \, \xrightarrow{\, \varepsilon
     \to 0 \,} 1 \]
  and
  \[ \int_{\mid z \mid < R} | z |^p \, \rho_{\varepsilon} (z) \,
     \d z = \eps p \frac{R^{p -
     \varepsilon p}}{p - \varepsilon p} \,
     \xrightarrow{\, \varepsilon \to 0 \,} 0. \]
  \item  Another interesting family of examples can be constructed as follows. For any $\phi\in C^{\infty}_c(\R^N)$, such that $\phi\geqslant 0$, and $\mathrm{supp}(\phi)\subset B(0,1)$, with $\int_{\R^N}\phi=1$, we define $\rho_{\varepsilon}(z):=\phi(z-a_{\varepsilon}e_1)$, where $a_{\varepsilon}\to\infty$ as $\varepsilon\to 0$ and $e_1$ is the first element of the canonical basis of $\R^N$. It is straightforward to see that this family of kernels satisfies \eqref{eq:kernels-R_fix}. 
 
 It is interesting to point out that the compactness of the support of $\phi$ can be removed in certain cases, for instance if we choose $\phi(x)=(2\pi)^{-N/2}e^{-{|x|^2}/{2}}$ and $a_\varepsilon=1/\varepsilon$. In fact, more generally, the two conditions in  \eqref{eq:kernels-R_fix} are satisfied as soon as $\| \phi \|_{L^1(\R^N)}=1$. To see this, we argue as follows.\\
  We start proving
\[
\lim_{\varepsilon\to 0}\int_{|z|<R}|z|^p\,\rho_\varepsilon(z)\,\d z=0.
\]
Clearly,
\[
0 \leqslant \int_{|z|<R}|z|^p\,\rho_\varepsilon(z)\,\d z
\leqslant R^p \int_{|z|<R}\rho_\varepsilon(z)\,\d z.
\]
Thus, it suffices to show that
\[
\lim_{\varepsilon\to 0}\int_{|z|<R}\rho_\varepsilon(z)\,\d z = 0.
\]
To prove this, set \(a_\varepsilon:=1/{\varepsilon}\) and make the change of variables $y=z-a_\varepsilon e_1$. We obtain
\[
\int_{|z|<R}\rho_\varepsilon(z)\,\d z
=\int_{|y+a_\varepsilon e_1 |<R}\phi(y)\,\d y.
\]
For each fixed \(y\in\mathbb{R}^N\) we have \(|y+a_\varepsilon e_1 |\to\infty\) as \(\varepsilon\to0\), so the indicator \(\mathbf{1}_{\{|y+a_\varepsilon e_1|<R\}}\) converges pointwise to \(0\). Because \(0\leqslant \mathbf{1}_{\{|y+a_\varepsilon e_1|<R\}}\phi(y)\leqslant \phi(y)\) and \(\phi\in L^1(\mathbb{R}^N)\) with \(\int_{\mathbb{R}^N}\phi=1\), we apply the Dominated Convergence Theorem to get
\[
\lim_{\varepsilon\to0}\int_{|y+a_\varepsilon e_1 |<R}\phi(y)\,\d y=0.
\]
Combining this with the previous inequalities, we infer the conclusion.\\ 
Let us now prove
\[
\lim_{\varepsilon\to 0}\int_{|z|>R}\rho_\varepsilon(z)\,\d z=1.
\]
As before, with the change of variables \(y=z-a_\varepsilon e_1\), we get
\[
\int_{|z|>R}\rho_\varepsilon(z)\,\d z
= \int_{|y+a_\varepsilon e_1|>R}\phi(y)\,\d y.
\]
For each fixed \(y\) the indicator \(\mathbf{1}_{\{|y+a_\varepsilon e_1|>R\}}\) converges pointwise to \(1\) as \(\varepsilon\to0\). Since $0\leqslant \mathbf{1}_{\{|y+a_\varepsilon e_1|>R\}}\phi(y)\leqslant \phi(y)$
and \(\phi\) is integrable, the Dominated Convergence Theorem yields
\[
\lim_{\varepsilon\to0}\int_{|y+a_\varepsilon e_1|>R}\phi(y)\,\d y
=\int_{\mathbb{R}^N}\phi(y)\,\d y=1,
\]
which proves the claim.
  
   \item An alternative example with a slightly stronger singularity in the origin is given by:
  \begin{equation*}
        \rho_{\varepsilon}
  (z) :=  \begin{cases} \frac{\varepsilon^2 p^2}{\mid \mathbb{S}^{N - 1} \mid} \, \log(1/|z|) | 
  z |^{- (N + \varepsilon p)}  & \textrm{in } B_1(0), \\ \frac{\varepsilon p}{\mid \mathbb{S}^{N - 1} \mid} \, |
  z |^{- (N + \varepsilon p)}&  \textrm{in } \R^N \setminus B_1(0). 
\end{cases} 
    \end{equation*}
For this example, it is easier to show that conditions \eqref{eq:kernels-R_lim} are satisfied. Indeed, in establishing \eqref{eq:kernels-R_lim} we can always assume that $R>1$. 
We argue as follows. If $R> 1$ then
\begin{align*}
\int_{|z|>R}\rho_{\varepsilon}(z)\ \d z=\varepsilon p\int_{R}^{\infty} r^{-\varepsilon p-1}\ \d r = \frac{1}{R^{\varepsilon p}}\to 1\quad \mbox{as}\ \varepsilon \to 0.
\end{align*}
This shows that the first condition in  \eqref{eq:kernels-R_lim} is satisfied.

We now prove that the second condition in \eqref{eq:kernels-R_lim} is fulfilled. Again, we can assume $R>1$. We obtain
\[
\int_{|z|<R}|z|^p\rho_{\varepsilon}(z)\ \d z= A_{\varepsilon} +B_{\varepsilon}:= \int_{1\leqslant |z|<R}|z|^p\rho_{\varepsilon}(z)\, \d z+\int_{|z|<1}|z|^p\rho_{\varepsilon}(z) \d z .
\]
Passing to polar coordinates, we find
\begin{equation*}
A_{\varepsilon}=\varepsilon p\int_1^R r^{p-\varepsilon p-1}\ \d r=\frac{\varepsilon p}{p-\varepsilon p}(R^{p-\varepsilon p}-1)\to 0\ \mbox{as}\ \varepsilon\to 0
\end{equation*}
\begin{align*}
&B_{\varepsilon}=(\varepsilon p)^2 \int_0^{1} \log(1/r)\, r^{p-1-\varepsilon p}\, \mathrm \d r=\frac{\varepsilon^2}{(1-\varepsilon)^2}\to 0\ \mbox{as}\ \varepsilon\to 0 ,
\end{align*}
where for the computation of $B_\varepsilon$ we simply used that for $\alpha>0$ a primitive of $r^\alpha \log(r)$ is given by the function $\frac{r^{\alpha + 1}}{\alpha + 1} \left( \log  r - \frac{1}{\alpha + 1}
\right)$.
\end{enumerate}
\end{remark}

\subsection{Proof of Theorem~\ref{thm:MS-smooth}} \label{proof:thmMssmooth}
\noindent In the following, for every $R > 0$, we will often use the decomposition
\begin{align}
  \mathcal{F}_{\varepsilon} (u) & =  \int_{|z| > R} \rho_{\varepsilon} (z) 
  \int_{x \in \R^N} |u (x + z) - u (x) |^p  \d x
  \hspace{0.17em} \d z \nonumber\\
  &  \qquad \qquad \qquad + \int_{|z| < R} \rho_{\varepsilon} (z)  \int_{x
  \in \R^N} |u (x + z) - u (x) |^p \d x \hspace{0.17em}
  \d z \nonumber\\
  & =:  I_{\varepsilon, R} [u] + \II_{\varepsilon, R} [u]. 
  \label{eq:decomp}
\end{align}
The proof of Theorem~\ref{thm:MS-smooth} rests on the following auxiliary result.{\smallskip} 
\begin{lemma}
  \label{lemma:(ii)-smooth}Let $R > 0$. The following statements are
  equivalent:
  \begin{enumerate}
    \item \label{i}
    \begin{equation}
      \lim_{\varepsilon \to 0}  \int_{|z| < R} |z|^p
      \rho_{\varepsilon} (z) \hspace{0.17em} \d z = 0.
    \end{equation}
    \item \label{ii} For every $u \in C^{\infty}_c (\R^N)$
    \begin{equation}
      \lim_{\varepsilon \to 0} \II_{\varepsilon, R} [u] =
      \lim_{\varepsilon \to 0}  \int_{|z| < R} \int_{\R^N}
      \rho_{\varepsilon} (z)  |u (x + z) - u (x) |^p \d x
      \hspace{0.17em} \d z = 0.
    \end{equation}
  \end{enumerate}
\end{lemma}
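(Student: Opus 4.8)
\emph{Proof proposal.} The plan is to prove the two implications separately: $(\ref{i})\Rightarrow(\ref{ii})$ is a routine upper bound, while $(\ref{ii})\Rightarrow(\ref{i})$ reduces, by contraposition, to exhibiting one non-trivial test function that ``feels'' the failure of $(\ref{i})$, and the crux will be a uniform pointwise lower bound on translation differences.

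For $(\ref{i})\Rightarrow(\ref{ii})$ I would use the elementary Sobolev bound already recalled in Remark~\ref{lemma:inclusion-smooth}: for $u\in C^\infty_c(\R^N)$, writing $u(x+z)-u(x)=\int_0^1\nabla u(x+tz)\cdot z\,\d t$ and invoking Minkowski's integral inequality together with the translation invariance of the $L^p$ norm gives $\|\tau_z u-u\|_{L^p}^p\le |z|^p\|\nabla u\|_{L^p}^p$. Hence
\[
0\;\le\;\II_{\varepsilon,R}[u]\;=\;\int_{|z|<R}\rho_\varepsilon(z)\,\|\tau_z u-u\|_{L^p}^p\,\d z\;\le\;\|\nabla u\|_{L^p}^p\int_{|z|<R}|z|^p\rho_\varepsilon(z)\,\d z,
\]
and letting $\varepsilon\to0$ under $(\ref{i})$ gives $\II_{\varepsilon,R}[u]\to0$.

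For $(\ref{ii})\Rightarrow(\ref{i})$ I would argue by contraposition, the key point being that a \emph{single} non-trivial $u_0$ suffices. Concretely, fix any $u_0\in C^\infty_c(\R^N)$ with $u_0\not\equiv0$ and aim at the pointwise lower bound
\begin{equation}\label{eq:Gplan}
\|\tau_z u_0-u_0\|_{L^p}^p\;\ge\;c\,|z|^p\qquad\text{for all }|z|<R,
\end{equation}
for some $c=c(u_0,R)>0$. Granting \eqref{eq:Gplan}, one gets $\II_{\varepsilon,R}[u_0]\ge c\int_{|z|<R}|z|^p\rho_\varepsilon(z)\,\d z$, so if $\int_{|z|<R}|z|^p\rho_\varepsilon(z)\,\d z\not\to0$ along some sequence $\varepsilon_k\to0$ then $\II_{\varepsilon_k,R}[u_0]\not\to0$, contradicting $(\ref{ii})$. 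Thus everything comes down to \eqref{eq:Gplan}.

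To establish \eqref{eq:Gplan} I would split $\overline{B_R}$ into a neighbourhood of the origin and its complement and run a compactness argument. Set $G(z):=\|\tau_z u_0-u_0\|_{L^p}^p$; then $G$ is continuous, $G(0)=0$, and $G(z)>0$ for $z\ne0$ (otherwise $u_0$ would be a non-trivial $z$-periodic continuous function with compact support). Near the origin, writing $\widehat z=z/|z|$ and using the uniform continuity of $\nabla u_0$ together with dominated convergence, $G(z)/|z|^p\to h(\widehat z):=\int_{\R^N}|\nabla u_0\cdot\widehat z|^p\,\d x$ as $|z|\to0$; the function $h$ is continuous and strictly positive on $\S^{N-1}$ (if $h(\omega)=0$ then $\nabla u_0\cdot\omega\equiv0$, so $u_0$ is constant along the direction $\omega$, forcing $u_0\equiv0$), hence $c_0:=\min_{\S^{N-1}}h>0$. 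If \eqref{eq:Gplan} failed for every $c>0$, pick $z_k\in\overline{B_R}\setminus\{0\}$ with $G(z_k)/|z_k|^p\to0$ and, along a subsequence, $z_k\to z_*$ and $z_k/|z_k|\to\omega_*$: the case $z_*=0$ contradicts $G(z_k)/|z_k|^p\to h(\omega_*)\ge c_0$, while the case $z_*\ne0$ contradicts $G(z_k)\to G(z_*)>0$, since there $G(z_k)\le R^p\,o(1)\to0$. I expect the main obstacle to be exactly this uniformity in the direction $\widehat z$ of the near-origin lower bound, which is where the continuity and strict positivity of the directional energies $h(\omega)$ on the sphere are used; the contribution away from the origin is a soft continuity/compactness fact glued in via the constraint $|z|<R$.
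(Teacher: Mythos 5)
Your proposal is correct, and both implications go through. The direction $(\ref{i})\Rightarrow(\ref{ii})$ is identical to the paper's argument. The interesting difference is in $(\ref{ii})\Rightarrow(\ref{i})$: you use a \emph{single} non-trivial test function $u_0$ and a soft compactness argument to establish the uniform pointwise bound $\|\tau_z u_0-u_0\|_{L^p}^p\geqslant c\,|z|^p$ on $\overline{B_R}$, while the paper splits $\{|z|<R\}$ into a near-origin region $\{|z|<\delta\}$ and an annulus $\{\delta<|z|<R\}$ and uses two \emph{different} test functions --- an explicit radial bump $u_1(x)=g(|x|)$ with $g(s)=\tfrac12 s^2$ near $0$ for the first region, and a function $u_2$ supported in $B_{\delta/2}$ (so that the difference quotient degenerates to $\|u_2\|_{L^p}^p$) for the second. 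Your route is cleaner and more conceptual: the two regimes are handled automatically by the sequential compactness argument on $\overline{B_R}\times\S^{N-1}$, using that $G$ is continuous and strictly positive on $\overline{B_R}\setminus\{0\}$ (compact support forbids nontrivial periodicity) and that the directional energies $h(\omega)=\int|\nabla u_0\cdot\omega|^p$ are continuous and strictly positive on $\S^{N-1}$ (compact support forbids a constant direction). The paper's route is more explicit and constructive, producing concrete constants, but is longer and requires tailoring the test function to each sub-region. Two small points worth spelling out if you write this up: (a) the convergence $G(z_k)/|z_k|^p\to h(\omega_*)$ when $z_k\to 0$, $z_k/|z_k|\to\omega_*$, rests on the identity $\frac{u_0(x+z_k)-u_0(x)}{|z_k|}=\int_0^1\nabla u_0(x+tz_k)\cdot\frac{z_k}{|z_k|}\,\d t$ together with the uniform continuity of $\nabla u_0$ and the fact that the integration in $x$ takes place over a fixed compact set; (b) strict positivity of $G$ and of $h$ both hinge on $u_0\not\equiv 0$ and compact support, so the hypothesis ``non-trivial'' is essential and should be stated up front.
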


\begin{proof}
  {\textbf{(\ref{i} $\Rightarrow$ \ref{ii}).}} For any $u \in C_c^{\infty}
  (\R^N)$, classical Sobolev inequalities give
  \[ \int_{\R^N} |u (x + z) - u (x) |^p  \d x \leqslant |z|^p  \|
     \nabla u\|^p_{L^p (\R^N)} . \]
  Therefore
  \[ 0\leqslant \II_{\varepsilon, R} [u] \leqslant \| \nabla u\|^p_{L^p ( \R^N
     )}  \int_{|z| < R} |z|^p \rho_{\varepsilon} (z) \hspace{0.17em}
     \d z, \]
  and the right‑hand side vanishes as $\varepsilon \to 0$ by
  assumption.{\smallskip}
  
  {\noindent}{\textbf{(\ref{ii} $\Rightarrow$ \ref{i}).}} We begin by
  decomposing the integral over $\{ 0 < \mid z \mid < R \}$ into
   $\{ 0 < \mid z \mid < \delta \}$ and $\{ \delta < \mid z \mid < R
  \}$, with $\delta \in (0, R)$ to be chosen later. We handle these two regions
  separately.\\
  We first estimate $\II_{\varepsilon}$ in the {\textbf{small
  range}} $0 < | z | < \delta$. Let $u_1 \in C_c^{\infty} ( \R^N
  )$. A first-order Taylor expansion of $\nabla u_1$ at $x
  \in \R^N$ yields, for each $| z | < \delta$,
  \[ \left| \int_0^1 \nabla u_1 (x
    + tz) \cdot \frac{z}{|z|} \hspace{0.17em} \d t \right|^p \,
     \geqslant \frac{1}{2^{p - 1}}  \left| \nabla u_1 (x) \cdot \frac{z}{|z|}
     \right|^p \, - \, \delta^p \, \| \nabla^2 u_1
     \|_{L^{\infty}}^p . \]
  It follows that
  \begin{align}
    \II_{\varepsilon, \delta} [u_1] & :=  \int_{|z| < \delta} |z|^p
    \rho_{\varepsilon} (z)  \int_{\R^N} \left| \int_0^1 \nabla u_1 (x
    + tz) \cdot \frac{z}{|z|} \hspace{0.17em} \d t \right|^p
    \d x \hspace{0.17em} \d z \nonumber\\
    & \geqslant  \int_{|z| < \delta} |z|^p \rho_{\varepsilon} (z) 
    \int_{B_{1 + \delta}} \left( \frac{1}{2^{p - 1}}  \left| \nabla u_1 (x)
    \cdot \frac{z}{|z|} \right|^p - \delta^p  \| \nabla^2 u_1
    \|_{L^{\infty}}^p \right) \d x \hspace{0.17em} \d z .
    \nonumber
  \end{align}
  Next, we observe that if we choose $u_1$ to be a radial bump, i.e., $u_1 (x)
  = g (|x|)$ for a suitable function $g \in C_c^{\infty} (\R_+)$ such that
  $\mathrm{supp} \,
  g \subseteq [0, 1)$, then, $\nabla u_1 (x) = g' (|x|)
  \frac{x}{|x|}$  for every $x \neq 0$ and with a change of variables, we find
  that
  \begin{equation}
    \II_{\varepsilon, \delta} [u_1] \geqslant \alpha_p (u_1)  \int_{|z| <
    \delta} |z|^p \rho_{\varepsilon} (z) \hspace{0.17em} \d z
    \label{eq:bound-delta}
  \end{equation}
  with
  \begin{equation}
    \alpha_p (u_1):= \left( \frac{1}{2^{p - 1}}  \int_{B_{1 + \delta}}
    \left| g' (|x|) \frac{x_1}{|x|} \right|^p \d x \right) - | B_{1 +
    \delta} | \delta^p  \| \nabla^2 u_1 \|_{L^{\infty}}^p .
  \end{equation}
  In particular, if we choose the radial profile $g (s) = \frac{1}{2} \, s^2$
  for $s \leqslant \frac{1}{2}$, then in $B_{1 / 2}$ we have $g' (|x|)
  \frac{x_1}{|x|} = x_1$, and therefore one obtains the lower bound
  \[ \alpha_p (u_1) \geqslant \frac{1}{2^{p - 1}}  \int_{B_{1 / 2}} | x_1 |^p
     \d x - | B_{1 + \delta} | \delta^p  \| \nabla^2 u_1
     \|_{L^{\infty}}^p, \]
  which is strictly positive provided $\delta$ is chosen sufficiently small.
  Combining this positivity with assumption~\ref{ii} and
  \eqref{eq:bound-delta}, we conclude that
  \begin{equation}
    \label{eq:cond-delta} \lim_{\varepsilon \to 0}  \int_{|z| <
    \delta} |z|^p \rho_{\varepsilon} (z) \hspace{0.17em} \d z = 0.
  \end{equation}
  We now turn to estimating $\II_{\varepsilon}$ in the
  {\textbf{intermediate range}} $\delta < | z | < R$. We choose $u_2 \in
  C^{\infty}_c (\R^N)$ such that $\supp u_2 \subseteq B_{\delta / 2} (0)$. We
  find that
  \begin{align}
    \II_{\varepsilon, R} [u_2] & \geqslant  \int_{\delta < |z| < R}
    \int_{|x| > \delta / 2} \rho_{\varepsilon} (z)  | u_2 (x + z) - u_2 (x)
    |^p \d x \hspace{0.17em} \d z \nonumber\\
    & = \int_{\delta < |z| < R} \int_{|x| > \delta / 2} \rho_{\varepsilon}
    (z)  | u_2 (x + z) |^p \d x \hspace{0.17em} \d z
    \nonumber\\
    & \geqslant \frac{1}{R^p}  \int_{\delta < |z| < R} |z|^p
    \rho_{\varepsilon} (z)  \int_{|x - z| > \delta / 2} | u_2 (x) |^p
    \d x \hspace{0.17em} \d z \nonumber\\
    & =  \frac{1}{R^p}  \int_{\delta < |z| < R} |z|^p \rho_{\varepsilon} (z)
    \int_{(\R^N \setminus B_{\delta / 2} (z)) \cap B_{\delta / 2}
    (0)} | u_2 (x) |^p \d x \hspace{0.17em} \d z. \nonumber
  \end{align}
  Observe that for $|z| > \delta$, we have $(\R^N \setminus
  B_{\delta / 2} (z)) \cap B_{\delta / 2} (0) \equiv B_{\delta / 2} (0)$.
  Therefore
  \begin{align}
    \II_{\varepsilon, R} [u_2] & \geqslant  \frac{1}{R^p}  \int_{\delta <
    |z| < R} |z|^p \rho_{\varepsilon} (z)  \int_{B_{\delta / 2} (0)} | u_2 (x)
    |^p \d x \hspace{0.17em} \d z \nonumber\\
    & = \frac{\| u_2 \|_{L^p}^p}{R^p}  \int_{\delta < |z| < R} |z|^p
    \rho_{\varepsilon} (z) \hspace{0.17em} \d z. \nonumber
  \end{align}
  Under assumption~\ref{ii}, taking the limit on both sides as $\varepsilon
  \to 0$ forces
  \begin{equation}
    \label{eq:cond-fuoridelta} \lim_{\varepsilon \to 0}  \int_{\delta
    < |z| < R} |z|^p \rho_{\varepsilon} (z) \hspace{0.17em} \d z = 0.
  \end{equation}
  The implication (\ref{ii} $\Rightarrow$ \ref{i}) now follows by combining
  \eqref{eq:cond-delta} and \eqref{eq:cond-fuoridelta}.
\end{proof}

\begin{proof}[Proof of Theorem~\ref{thm:MS-smooth}.]
  {\noindent}{\textbf{(\ref{cond:R-fix} $\Rightarrow$ \ref{cond:R-lim}).}}
  Trivial by taking the limits in the prescribed order.{\smallskip}
  
  {\noindent}{\textbf{(\ref{cond:R-lim} $\Rightarrow$ \ref{cond:R-lim3}).}}
  First, by monotonicity in $R > 0$, the second condition in equation
  \eqref{eq:kernels-R_lim} is equivalent to the requirement
  \begin{equation}
    \lim_{\varepsilon \to 0}  \int_{|z| < R} |z|^p \rho_{\varepsilon} (z)
    \hspace{0.17em} \d z = 0 \quad \text{for every } R > 0.
  \end{equation}
  Let $u \in C^{\infty}_c (\R^N)$ and decompose the nonlocal energy as in
  \eqref{eq:decomp}, i.e., $\mathcal{F}_{\veps} (u) = I_{\veps, R} [u] + I 
  I_{\veps, R} [u]$. By Lemma~\ref{lemma:(ii)-smooth}, for each fixed $R > 0$, we have that
  \begin{equation}
    \lim_{\varepsilon \to 0} \II_{\veps, R} [u] = \lim_{\varepsilon
    \to 0}  \int_{|z| < R} \int_{\R^N} \rho_{\varepsilon} (z) 
    |u (x + z) - u (x) |^p \hspace{0.17em} \d x \hspace{0.17em}
    \d z = 0. \label{eq:II-smooth}
  \end{equation}
  Next, choose $R$ so large that $\supp u \subset B_{R / 2}$. Then,
  \begin{align}
    I_{\varepsilon, R} [u] & =  \int_{|z| > R} \rho_{\varepsilon} (z) \left(
    \int_{|x| < R / 2} | u (x + z) - u (x) |^p  \d x + \int_{|x| > R / 2}  |
    u (x + z) |^p  \d x \right)  \d z \nonumber\\
    & =  \int_{|z| > R} \rho_{\varepsilon} (z) \left( \int_{|x| < R / 2} | u
    (x) |^p  \d x + \int_{|x| > R / 2} | u (x + z) |^p  \d x \right)  \d
    z \nonumber\\
    & = 2 \hspace{0.17em} \|u\|_{L^p}^p  \int_{|z| > R} \rho_{\varepsilon}
    (z)  \hspace{0.17em} \d z,  \label{eq:I-smooth}
  \end{align}
  where we used that the shifted support $x + z$ remains outside $B_{R / 2}$
  when $|z| > R$ and $|x| < R / 2$, i.e., $|x + z| \geqslant |z| - |x|
  \geqslant R / 2$.
  
  Combining \eqref{eq:II-smooth} and \eqref{eq:I-smooth}, for every $R > 0$
  such that $\supp u \subset\subset B_{R / 2}$, we get that
  \[ \lim_{\varepsilon \to 0} \mathcal{F}_{\varepsilon} (u) = 2
     \|u\|_{L^p}^p \lim_{\varepsilon \to 0}  \int_{|z| > R}
     \rho_{\varepsilon} (z) \hspace{0.17em} \d z. \]
  Finally, letting $R \rightarrow \infty$ and using the first condition in
  item~\ref{cond:R-lim}, yields that
  \[ \lim_{\varepsilon \to 0} \mathcal{F}_{\varepsilon} (u) = 2
     \|u\|_{L^p}^p \lim_{R \rightarrow \infty} \lim_{\varepsilon \rightarrow
     0}  \int_{|z| > R} \rho_{\varepsilon} (z) \hspace{0.17em} \d z =
     2 \|u\|_{L^p}^p \]
  for every $u\in C^{\infty}_c(\R^N)$, as claimed. This concludes the proof of the implication (\ref{cond:R-lim}
  $\Rightarrow$ \ref{cond:R-lim3}).{\smallskip}
  
  {\noindent}{\textbf{(\ref{cond:R-lim3} $\Rightarrow$ \ref{cond:R-fix}).}}
  Let $u \in C_c^{\infty} (\R^N)$. By hypothesis, we have
  \begin{equation}
    \label{eq:hp-1} \lim_{\varepsilon \to 0} \mathcal{F}_{\varepsilon}
    (u) = 2 \|u\|_{L^p}^p
  \end{equation}
  and for every fixed $R > 0$,
  \begin{equation}
    \label{eq:hp-2} \lim_{\varepsilon \to 0} \II_{\varepsilon, R} [u]
    = 0.
  \end{equation}
  According to Lemma~\ref{lemma:(ii)-smooth}, the previous relation entails
  the second relation in item~\ref{cond:R-fix}, i.e., that
  \begin{equation}
    \lim_{\varepsilon \to 0}  \int_{|z| < R} |z|^p \rho_{\varepsilon}
    (z) \hspace{0.17em} \d z = 0 \quad \text{for every } R > 0.
  \end{equation}
  It remains to show the complementary tail condition
  \[ \lim_{\varepsilon \to 0}  \int_{|z| > R} \rho_{\varepsilon} (z)
     \hspace{0.17em} \d z = 1 \quad \text{for every } R > 0. \]
  To this end, fix $R > 0$ and choose $u \in C_c^{\infty} (\R^N)$
  with $\mathrm{supp} \, u \subset B_{R / 2}$. A direct
  computation---identical to that which produced equation \eqref{eq:I-smooth}---gives
  \[ \mathcal{F}_{\varepsilon} (u) = 2 \|u\|_{L^p}^p  \int_{|z| > R}
     \rho_{\varepsilon} (z) \hspace{0.17em} \d z + \II_{\varepsilon,
     R}[u]. \]
  Passing to the limit as $\varepsilon \to 0$ and invoking
  \eqref{eq:hp-1}-\eqref{eq:hp-2} shows
  \[ 2 \|u\|_{L^p}^p = 2 \|u\|_{L^p}^p \lim_{\varepsilon \to 0} 
     \int_{|z| > R} \rho_{\varepsilon} (z) \hspace{0.17em} \d z, \]
  from which the tail condition follows at once. This completes the proof of
  Theorem~\ref{thm:MS-smooth}.
\end{proof}

We conclude this section with three corollaries showing that, under further integrability assumptions on the kernels, condition \eqref{condex} can be dropped.
\begin{corollary}
    
\label{rmk:kernelsamearea}
    Under the same assumptions as in Theorem~\ref{thm:MS-smooth}, assume in addition that $\int_{\R^N} \rho_{\varepsilon} (x) 
\hspace{0.17em} \mathrm{d} x = 1$ for every $\varepsilon > 0$. Then, the condition \eqref{condex} in Theorem~\ref{thm:MS-smooth} is not needed, since it follows automatically from the convergence assumption. Precisely, 
under the uniform normalization condition
\[ \int_{\R^N} \rho_{\varepsilon} (x)  \hspace{0.17em} \mathrm{d} x =
   1, \quad \forall \varepsilon > 0, \]
the Maz'ya--Shaposhnikova (MS) limit $\lim _{\varepsilon \rightarrow 0}
\mathcal{F}_{\varepsilon} (u) = 2 \|u\|_{L^p}^p$ for all $u \in C_c^{\infty}
( \R^N )$ implies the fixed-radius moment conditions
\[ \lim_{\varepsilon \rightarrow 0} \int_{\mid z \mid > R} \rho_{\varepsilon}
   (z) \hspace{0.17em} \mathrm{d} z = 1 \quad \text{and} \quad \lim_{\varepsilon \rightarrow
   0} \int_{\mid z \mid \leqslant R} | z |^p \rho_{\varepsilon}
   (z)\hspace{0.17em} \mathrm{d} z = 0 \]
for all $R > 0$.
\end{corollary}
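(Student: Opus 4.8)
The plan begins with a reduction: the extra normalization $\int_{\R^N}\rho_\varepsilon(z)\,\d z=1$ collapses the two requirements in \eqref{eq:kernels-R_fix} into a single one. Indeed each $\rho_\varepsilon$ is an $L^1$ density, so the measures $\rho_\varepsilon(z)\,\d z$ charge no sphere and $\int_{|z|>R}\rho_\varepsilon=1-\int_{|z|<R}\rho_\varepsilon$ for every $R>0$; moreover $\int_{|z|<R}|z|^p\rho_\varepsilon\leqslant R^p\int_{|z|<R}\rho_\varepsilon$. Hence it suffices to prove
\[
\lim_{\varepsilon\to 0}\int_{|z|<R}\rho_\varepsilon(z)\,\d z=0\qquad\text{for every }R>0,
\]
after which the first condition in \eqref{eq:kernels-R_fix} follows by complementation and the second by the crude bound just recorded. (One could also feed this into Theorem~\ref{thm:MS-smooth}: together with the trivial bound $\II_{\varepsilon,R}[u]\leqslant 2^p\|u\|_{L^p}^p\int_{|z|<R}\rho_\varepsilon$ it recovers \eqref{condex}, hence the third item of that theorem and thus the first; but the direct route is shorter.)

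To establish the displayed identity I would test the hypothesis $\mathcal F_\varepsilon(u)\to 2\|u\|_{L^p}^p$ against a dilated nonnegative bump. Fix $R>0$, choose $v\in C_c^\infty(\R^N)$ with $v\geqslant 0$, $v\not\equiv 0$, and set $u_\mu(x):=v(x/\mu)$; scaling gives $\|u_\mu\|_{L^p}^p=\mu^N\|v\|_{L^p}^p$ and $\|\nabla u_\mu\|_{L^p}^p=\mu^{N-p}\|\nabla v\|_{L^p}^p$, so that
\[
C_\mu:=2\|u_\mu\|_{L^p}^p-R^p\|\nabla u_\mu\|_{L^p}^p=\mu^{N}\bigl(2\|v\|_{L^p}^p-R^p\mu^{-p}\|\nabla v\|_{L^p}^p\bigr)>0
\]
once $\mu$ is large enough. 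Fix such a $\mu$, write $u:=u_\mu$, $C:=C_\mu$, and $A_u(z):=\|\tau_z u-u\|_{L^p(\R^N)}^p$. Two elementary bounds are available: the standard Sobolev estimate $A_u(z)\leqslant|z|^p\|\nabla u\|_{L^p}^p$, and — using $u\geqslant 0$ together with $|u(x+z)-u(x)|\leqslant\max\{u(x+z),u(x)\}$ and $\max\{a,b\}^p\leqslant a^p+b^p$ — the uniform bound $A_u(z)\leqslant 2\|u\|_{L^p}^p$ valid for every $z\in\R^N$. Splitting $\mathcal F_\varepsilon(u)=\int_{|z|<R}\rho_\varepsilon A_u+\int_{|z|\geqslant R}\rho_\varepsilon A_u$, estimating the first integral with $A_u(z)\leqslant R^p\|\nabla u\|_{L^p}^p$ and the second with $A_u(z)\leqslant 2\|u\|_{L^p}^p$, and substituting $\int_{|z|\geqslant R}\rho_\varepsilon=1-\int_{|z|<R}\rho_\varepsilon$, one arrives at
\[
\mathcal F_\varepsilon(u)\leqslant 2\|u\|_{L^p}^p-C\int_{|z|<R}\rho_\varepsilon(z)\,\d z.
\]
Thus $0\leqslant C\int_{|z|<R}\rho_\varepsilon(z)\,\d z\leqslant 2\|u\|_{L^p}^p-\mathcal F_\varepsilon(u)$, and the right-hand side tends to $0$ by hypothesis, so $\int_{|z|<R}\rho_\varepsilon\to 0$; since $R>0$ was arbitrary, the displayed identity holds and the proof closes.

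I expect the only real difficulty to be the choice of test function in the second step. No fixed $u\in C_c^\infty(\R^N)$ can satisfy $2\|u\|_{L^p}^p>R^p\|\nabla u\|_{L^p}^p$ simultaneously for all $R$, since there is no Poincaré inequality on all of $\R^N$; the remedy is exactly to dilate a single nonnegative bump, which sends $\|\nabla u_\mu\|_{L^p}^p/\|u_\mu\|_{L^p}^p=\mu^{-p}\|\nabla v\|_{L^p}^p/\|v\|_{L^p}^p\to 0$. The nonnegativity of the test function is also essential: it is what allows the crude bound $A_u(z)\leqslant 2\|u\|_{L^p}^p$ to be used on the \emph{whole} region $\{|z|\geqslant R\}$ rather than only where the supports of $u$ and $\tau_z u$ are disjoint, and this is precisely what makes the cancellation against the normalization exact. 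Everything else is routine bookkeeping.
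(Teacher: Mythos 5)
Your proof is correct. It proves the stated implication, and it does so by a route that is recognizably different from the paper's. The paper fixes $R$, takes a nonnegative radial bump $u$ supported in a ball $B(0,r)$ with $r>R+\rho$, shows that $g_u(z):=2\|u\|_{L^p}^p-A_u(z)$ is continuous, nonnegative, compactly supported and \emph{strictly positive} on the compact set $\overline{B(0,s)}$ with $s=r-\rho>R$, and then uses the positive minimum $m_u(s)$ of $g_u$ there as the lower-bound constant in $\int\rho_\varepsilon g_u\geqslant m_u(s)\int_{|z|\leqslant s}\rho_\varepsilon$; it then handles the short-range attenuation in a separate step, feeding the resulting $\lim_{\varepsilon\to 0}\int_{|z|\leqslant R}\rho_\varepsilon A=0$ back through Lemma~\ref{lemma:(ii)-smooth}. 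You instead get a single quantitative positive constant $C=2\|u_\mu\|_{L^p}^p-R^p\|\nabla u_\mu\|_{L^p}^p$ by dilating one fixed nonnegative bump, using the scaling $\|\nabla u_\mu\|_{L^p}^p/\|u_\mu\|_{L^p}^p=\mu^{-p}\|\nabla v\|_{L^p}^p/\|v\|_{L^p}^p\to 0$; both moment conditions then drop out at once from $\int_{|z|<R}\rho_\varepsilon\to 0$ together with the normalization and the crude bound $\int_{|z|<R}|z|^p\rho_\varepsilon\leqslant R^p\int_{|z|<R}\rho_\varepsilon$, so you never need to invoke Lemma~\ref{lemma:(ii)-smooth}. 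Both arguments hinge on the same two ingredients --- nonnegativity of the test function to get $A_u(z)\leqslant 2\|u\|_{L^p}^p$ (rather than the generic $2^p\|u\|_{L^p}^p$), and the exact normalization $\int\rho_\varepsilon=1$ to make that upper bound cancel against the target limit --- but your dilation trick replaces the paper's compactness/minimum argument with an explicit computation and folds the two-step structure (plus the auxiliary lemma) into one. This is a genuine simplification; the paper's version is more structural but also more elaborate than necessary for this corollary.
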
 
\begin{proof}
It is convenient to introduce some notation. For a function $u \in C_c^{\infty}
( \R^N)$,
set
\[ A_u (z) : = \int_{\R^N} | u (x + z) - u (x) |^p \hspace{0.17em} \mathrm{d} x, \quad g_u (z)
   : = 2 \|u\|_{L^p}^p - A_u (z) . \]
Observe that $A_u$ is continuous in $z$, $0 \leqslant A_u (z) \leqslant 2^{p}
\|u\|_{L^p}^p$, and if $\ensuremath{\operatorname{supp}}  \; u \subset B (0,
R_0)$ for some $R_0 > 0$, then $A_u (z) = 2 \|u\|_{L^p}^p$ for $| z | > 2 R_0$;
hence $g_u$ is continuous, non-negative, and compactly supported.{\smallskip}

{\noindent}\text{{\bfseries{Step 1: Towards the mass escape condition
\eqref{eq:kernels-R_lim}.}}} We show that for every fixed $R > 0$,
\[ \lim_{\varepsilon \to 0}  \int_{|z| \leqslant R} \rho_{\varepsilon} (z)
   \hspace{0.17em} \mathrm{d} z = 0, \qquad \text{or equivalently} \qquad \lim_{\varepsilon \to
   0}  \int_{|z| > R} \rho_{\varepsilon} (z) \hspace{0.17em} \mathrm{d} z = 1. \]
From the MS hypothesis and the normalization $\int_{\R^N}
\rho_{\varepsilon} = 1$ we have, for every fixed $u \in C_c^{\infty} (\R^N)$,
\begin{equation}
  \lim_{\varepsilon \to 0}  \int_{\R^N} \rho_{\varepsilon} (z) A_u (z)
  \hspace{0.17em} \mathrm{d} z = 2 \|u\|_{L^p}^p \quad \Longleftrightarrow \quad
  \lim_{\varepsilon \to 0}  \int_{\R^N} \rho_{\varepsilon} (z) g_u (z)
  \hspace{0.17em} \mathrm{d} z = 0. \label{eq:JMS2}
\end{equation}
Let $R > 0$ be fixed. Choose numbers $0 < \rho < r$ such that $r > R + \rho$,
and set $s := r - \rho$. Note that $s > R$ by construction. Let $u \in
C_c^{\infty} (\R^N)$ be a radial function satisfying
\begin{equation}
  \mathrm{supp}\, u \subset \overline{B (0, r)}, \qquad u \geqslant 0, \qquad u
  > 0 \text{ on } B (0, r). \label{eq:testuprops}
\end{equation}
As a first step, we want to prove that $g_u$ is strictly positive on the
compact set $\overline{B (0, s)}$, so that $g_u$ attains a strictly positive
minimum in there:
\begin{equation}
  m_u (s) := \min_{|z| \leqslant s} g_u (z) > 0. \label{eq:minu}
\end{equation}
For that, we analyze the term
\[ A_u (z) = \int_{\R^N} |u (x + z) - u (x) |^p  \hspace{0.17em}
   \mathrm{d} x. \]
By the elementary lemma (if $a, b > 0$ and $p \geqslant 1$ then $|a - b|^p <
a^p + b^p$), in order to conclude that $g_u (z)$ is strictly positive on
$\overline{B (0, s)}$, it is sufficient to prove that $u (\cdot)$ and $u (\cdot
+ z)$ are {{\em both strictly positive\/}} on a set of positive Lebesgue
measure, namely, as we now show, on $B (0, \rho)$. Indeed, after that, we have
\[ \int_{\R^N} |u (x + z) - u (x) |^p \hspace{0.17em} \mathrm{d} x <
   \int_{\R^N} u^p (x + z) + u^p (x) \hspace{0.17em} \mathrm{d} x = 2 \|u\|_{L^p}^p
   \quad \text{for all } |z| \leqslant s, \]
that is $g_u (z) > 0$ for every $|z| \leqslant s$, from which \eqref{eq:minu}
follows. Thus, we have to prove that for every $| z | \leqslant s$ both $u
(\cdot)$ and $u (\cdot + z)$ are {{\em strictly positive\/}} on $B (0, \rho)$.
This is indeed the case. For every $|z| \leqslant s$ and every $| x | < \rho$
we have
\[ | x | < r, \quad | x + z | < r. \]
Hence $u (x) > 0$ and $u (x + z) > 0$. Thus, for every $|z| \leqslant s$, the
functions $u (\cdot)$ and $u (\cdot + z)$ are {{\em strictly positive\/}} on
the open set $B (0, \rho)$ of positive Lebesgue measure.

Applying \eqref{eq:JMS2} to the test function $u$ in \eqref{eq:testuprops},
and using \eqref{eq:minu}, we get that
\[ 0 = \lim_{\varepsilon \to 0}  \int_{\R^N} \rho_{\varepsilon} (z)
   g_u (z) \hspace{0.17em} \mathrm{d} z \geqslant \limsup_{\varepsilon \to 0}  \int_{|z|
   \leqslant s} \rho_{\varepsilon} (z) g_u (z) \hspace{0.17em} \mathrm{d} z \geqslant m_u (s)
   \limsup_{\varepsilon \to 0}  \int_{|z| \leqslant s} \rho_{\varepsilon} (z)
   \hspace{0.17em} \mathrm{d} z. \]
Since $m_u (s) > 0$ we deduce
\[ \lim_{\varepsilon \to 0}  \int_{|z| \leqslant s} \rho_{\varepsilon} (z)
   \hspace{0.17em} \mathrm{d} z = 0. \]
Because $s > R$ and $\rho_{\varepsilon} \geqslant 0$, we also get
\[ 0 \leqslant \limsup_{\varepsilon \to 0}  \int_{|z| \leqslant R}
   \rho_{\varepsilon} (z) \hspace{0.17em} \mathrm{d} z \leqslant \lim_{\varepsilon \to 0} 
   \int_{|z| \leqslant s} \rho_{\varepsilon} (z) \hspace{0.17em} \mathrm{d} z = 0, \]
which proves the desired mass escape
\[ \lim_{\varepsilon \to 0}  \int_{|z| \leqslant R} \rho_{\varepsilon} (z)
   \hspace{0.17em} \mathrm{d}z = 0, \quad \text{and hence} \quad \lim_{\varepsilon \to 0} 
   \int_{|z| > R} \rho_{\varepsilon} (z) \hspace{0.17em} \mathrm{d} z = 1. \]

\smallskip
{\noindent}\text{{\bfseries{Step 2: Short-range attenuation effect.}}} Let $R
> 0$ be fixed. Choose a function $u \in C^{\infty}_c (\R^N)$ with
$\supp u \subset B (0, R / 2)$. For any $|z| > R$, the supports of $u
(\cdot + z)$ and $u (\cdot)$ are disjoint, so $A (z) = 2 \|u\|_{L^p}^p$ for
any $| z | > R$. It follows that
\[ \mathcal{F}_{\varepsilon} (u) = 2 \|u\|_{L^p}^p  \int_{|z| > R}
   \rho_{\varepsilon} (z) \hspace{0.17em} \mathrm{d} z + \int_{|z| \leqslant R}
   \rho_{\varepsilon} (z) A (z) \hspace{0.17em} \mathrm{d} z. \]
We take the limit as $\varepsilon \to 0$. By the MS formula \eqref{eq:MS-smooth} and the mass escape property from Step 1, we get
\[ \lim_{\varepsilon \to 0}  \int_{|z| \leqslant R} \rho_{\varepsilon} (z) A
   (z) \hspace{0.17em} \mathrm{d} z = 0. \]
By Lemma \eqref{lemma:(ii)-smooth} we conclude.
\end{proof}

We omit the proofs of the next two corollaries, for they are a direct consequence of the proof of Theorem~\ref{thm:MS-smooth}. 
\begin{corollary}
Assume that the kernels $\{\rho_{\eps}\}$ further satisfy
    \begin{equation*}
      \lim_{R \to \infty} \lim_{\varepsilon \to 0} 
      \int_{|z| > R} \rho_{\varepsilon} (z) \hspace{0.17em} \d z = 1.
    \end{equation*}
    Then,
    \begin{equation*}
        \lim_{R \to \infty} \lim_{\varepsilon \to 0} 
      \int_{|z| < R} |z|^p \rho_{\varepsilon} (z) \hspace{0.17em} \d z
      = 0
    \end{equation*}
    if and only if for every smooth, compactly supported function $u
    \in C^{\infty}_c (\R^N)$, $\mathcal{F}_{\varepsilon} (u)$ is well-defined    for $\varepsilon$ sufficiently small, and
    \begin{equation*}
      \lim_{\varepsilon \to 0} \mathcal{F}_{\varepsilon}
      (u) = 2 \|u\|^p_{L^p (\R^N)} .
    \end{equation*}
\end{corollary}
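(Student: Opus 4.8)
The statement is a light refinement of Theorem~\ref{thm:MS-smooth}: the first iterated-limit condition in \eqref{eq:kernels-R_lim} is now a standing hypothesis, and what remains is to show that, under it, the \emph{bare} convergence \eqref{eq:MS-smooth} (without its companion \eqref{condex}) is equivalent to the second iterated-limit condition. The plan is to treat the two implications separately; one cannot simply quote the implication (\ref{cond:R-lim3} $\Rightarrow$ \ref{cond:R-fix}) of Theorem~\ref{thm:MS-smooth}, because item~\ref{cond:R-lim3} there bundles \eqref{eq:MS-smooth} together with \eqref{condex}, and producing \eqref{condex} out of \eqref{eq:MS-smooth} and the tail-mass hypothesis is precisely the work to be done. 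The \emph{sufficiency} half is pure bookkeeping: if $\lim_{R\to\infty}\lim_{\eps\to0}\int_{|z|<R}|z|^p\rho_\eps(z)\,\d z=0$, then together with the standing mass-escape hypothesis the full pair \eqref{eq:kernels-R_lim} holds, i.e.\ item~\ref{cond:R-lim} of Theorem~\ref{thm:MS-smooth}, whence, by the implication (\ref{cond:R-lim} $\Rightarrow$ \ref{cond:R-lim3}) of that theorem, the functional $\mathcal{F}_\eps(u)$ is well defined for $\eps$ small and $\lim_{\eps\to0}\mathcal{F}_\eps(u)=2\|u\|^p_{L^p(\R^N)}$ for every $u\in C^{\infty}_c(\R^N)$.

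For the \emph{necessity} half I would argue in three steps. \textbf{Step~1: upgrade the tail mass to every fixed radius.} Write $\ell(R):=\lim_{\eps\to0}\int_{|z|>R}\rho_\eps(z)\,\d z$ (the inner limit exists for each $R>0$ since the assumed iterated limit presupposes it). Because $R\mapsto\int_{|z|>R}\rho_\eps$ is non-increasing, so is $\ell$, and $\lim_{R\to\infty}\ell(R)=1$ forces $\ell(R)\ge1$ for every $R>0$. Conversely, testing \eqref{eq:MS-smooth} against a non-trivial $u\in C^{\infty}_c(\R^N)$ with $\supp u\subset B_{R/2}$ and using \eqref{eq:decomp} and \eqref{eq:I-smooth}, namely $\mathcal{F}_\eps(u)=2\|u\|_{L^p}^p\int_{|z|>R}\rho_\eps(z)\,\d z+\II_{\eps,R}[u]$ with $\II_{\eps,R}[u]\ge0$, one gets $\int_{|z|>R}\rho_\eps(z)\,\d z\le \mathcal{F}_\eps(u)/(2\|u\|_{L^p}^p)\to1$ as $\eps\to0$; hence $\ell(R)=1$ for every $R>0$ (and, in passing, $\int_{|z|>R}\rho_\eps$ is finite for $\eps$ small). \textbf{Step~2: recover \eqref{condex}.} Fix $u\in C^{\infty}_c(\R^N)$ with $\supp u\subset B_{R_0}$ and fix $R>0$; with $A_u(z):=\int_{\R^N}|u(x+z)-u(x)|^p\,\d x$ one has $A_u\equiv 2\|u\|_{L^p}^p$ on $\{|z|>2R_0\}$ and $0\le A_u\le 2^p\|u\|_{L^p}^p$ elsewhere, so splitting $I_{\eps,R}[u]=\int_{|z|>R}\rho_\eps(z)A_u(z)\,\d z$ into the part over $\{|z|>2R_0\}$ and the bounded remainder over $\{R<|z|\le 2R_0\}$ (whose $\rho_\eps$-mass equals $\int_{|z|>R}\rho_\eps-\int_{|z|>2R_0}\rho_\eps\to0$ by Step~1, while the mass of $\{|z|>2R_0\}$ tends to $1$) gives $I_{\eps,R}[u]\to 2\|u\|_{L^p}^p$; since $\mathcal{F}_\eps(u)=I_{\eps,R}[u]+\II_{\eps,R}[u]$ has non-negative summands and tends to $2\|u\|_{L^p}^p$, this forces $\lim_{\eps\to0}\II_{\eps,R}[u]=0$, i.e.\ \eqref{condex}. \textbf{Step~3: conclude.} Since \eqref{condex} now holds for every $R>0$ and every $u\in C^{\infty}_c(\R^N)$, the implication (\ref{ii} $\Rightarrow$ \ref{i}) of Lemma~\ref{lemma:(ii)-smooth} gives $\lim_{\eps\to0}\int_{|z|<R}|z|^p\rho_\eps(z)\,\d z=0$ for every $R>0$, which a fortiori yields the claimed iterated limit.

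The only place a genuine (if modest) idea enters is Step~1 — the observation that the MS limit alone already controls the tail mass at every radius once it is known to do so at infinity. Everything after that is a rearrangement of computations already present in the proof of Theorem~\ref{thm:MS-smooth}, closed off by a single application of Lemma~\ref{lemma:(ii)-smooth}; the only friction I anticipate is the routine bookkeeping in Step~2, in particular the degenerate sub-case $R\ge 2R_0$, where the remainder annulus $\{R<|z|\le 2R_0\}$ is empty.
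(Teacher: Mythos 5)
Your proof is correct. The paper omits the argument, declaring it a ``direct consequence of the proof of Theorem~\ref{thm:MS-smooth}'', and your proposal supplies exactly the details that declaration leaves implicit; you correctly identify that one cannot cite the implication (\ref{cond:R-lim3}~$\Rightarrow$~\ref{cond:R-fix}) wholesale, because the corollary drops \eqref{condex} from the right-hand side, and that the real content is recovering \eqref{condex} from the bare MS limit together with the standing mass-escape hypothesis.

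Your Step~1 is where the genuinely new observation lies: the standing hypothesis gives $\ell(R)\geqslant 1$ by monotonicity, while testing \eqref{eq:MS-smooth} against a nontrivial $u$ supported in $B_{R/2}$ and invoking \eqref{eq:decomp}--\eqref{eq:I-smooth} together with $\II_{\varepsilon,R}[u]\geqslant 0$ gives $\limsup_{\varepsilon\to 0}\int_{|z|>R}\rho_\varepsilon\leqslant 1$; both bounds combine to $\ell(R)=1$ for every $R>0$. (One small phrasing quibble: the hypothesis only guarantees existence of the inner limit for large $R$; but for smaller $R$ the two inequalities $\liminf_{\varepsilon}\int_{|z|>R}\rho_\varepsilon\geqslant\ell(R')$ for $R'>R$ and $\limsup_\varepsilon\int_{|z|>R}\rho_\varepsilon\leqslant 1$ already force the limit to exist and equal 1, so nothing is actually missing.) Step~2 then splits $I_{\varepsilon,R}[u]=\int_{|z|>R}\rho_\varepsilon A_u$ over $\{|z|>2R_0\}$ (where $A_u\equiv 2\|u\|_{L^p}^p$) and the shell $\{R<|z|\leqslant 2R_0\}$ (whose $\rho_\varepsilon$-mass vanishes by Step~1), yielding $I_{\varepsilon,R}[u]\to 2\|u\|_{L^p}^p$ and hence $\II_{\varepsilon,R}[u]\to 0$, i.e.\ \eqref{condex}. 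Step~3 closes via Lemma~\ref{lemma:(ii)-smooth}. The sufficiency half is, as you say, an immediate quotation of (\ref{cond:R-lim}~$\Rightarrow$~\ref{cond:R-lim3}). This matches the route the paper's proof of Theorem~\ref{thm:MS-smooth} makes natural.
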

\begin{corollary}
Assume that the kernels $\{\rho_{\eps}\}$ further satisfy
    \begin{equation*}
      \lim_{R \to \infty} \lim_{\varepsilon \to 0} 
      \int_{|z| < R} |z|^p \rho_{\varepsilon} (z) \hspace{0.17em} \d z
      = 0.
    \end{equation*}
    Then,
    \begin{equation*}
    \lim_{R \to \infty} \lim_{\varepsilon \to 0} 
      \int_{|z| > R} \rho_{\varepsilon} (z) \hspace{0.17em} \d z = 1,
    \end{equation*}
    if and only if for every smooth, compactly supported function $u
    \in C^{\infty}_c (\R^N)$, $\mathcal{F}_{\varepsilon} (u)$ is well-defined
    for $\varepsilon$ sufficiently small, and
    \begin{equation*}
      \lim_{\varepsilon \to 0} \mathcal{F}_{\varepsilon}
      (u) = 2 \|u\|^p_{L^p (\R^N)} .
    \end{equation*}
\end{corollary}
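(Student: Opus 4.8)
The statement is a \emph{conditional} refinement of Theorem~\ref{thm:MS-smooth}, and the plan is to deduce it from that theorem together with Lemma~\ref{lemma:(ii)-smooth}, without re-running the whole argument. The key preliminary observation is that the standing hypothesis
\[
  \lim_{R\to\infty}\lim_{\varepsilon\to 0}\int_{|z|<R}|z|^p\rho_\varepsilon(z)\,\d z=0
\]
is, by monotonicity of $R\mapsto\int_{|z|<R}|z|^p\rho_\varepsilon(z)\,\d z$, equivalent to the fixed-radius form $\lim_{\varepsilon\to 0}\int_{|z|<R}|z|^p\rho_\varepsilon(z)\,\d z=0$ for every $R>0$ --- this is precisely the reduction already used in the implication (\ref{cond:R-lim} $\Rightarrow$ \ref{cond:R-lim3}) of Theorem~\ref{thm:MS-smooth}. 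By Lemma~\ref{lemma:(ii)-smooth}, this in turn yields $\lim_{\varepsilon\to 0}\II_{\varepsilon,R}[u]=0$ for every $u\in C_c^\infty(\R^N)$ and every $R>0$; in particular condition \eqref{condex} holds automatically under the standing hypothesis. Hence, whenever the mass-escape condition is also available, the full item~(\ref{cond:R-lim3}) of Theorem~\ref{thm:MS-smooth} is in force.

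For the implication ``iterated mass escape $\Rightarrow$ MS formula'': assuming in addition $\lim_{R\to\infty}\lim_{\varepsilon\to 0}\int_{|z|>R}\rho_\varepsilon(z)\,\d z=1$, both conditions in item~(\ref{cond:R-lim}) of Theorem~\ref{thm:MS-smooth} hold, so by that theorem item~(\ref{cond:R-lim3}) holds: in particular $\mathcal{F}_\varepsilon(u)$ is well-defined for $\varepsilon$ small and $\lim_{\varepsilon\to 0}\mathcal{F}_\varepsilon(u)=2\|u\|^p_{L^p(\R^N)}$ for every $u\in C_c^\infty(\R^N)$. (Well-definedness for small $\varepsilon$ can alternatively be seen directly via the finite-L\'evy-measure argument of Remark~\ref{lemma:inclusion-smooth}, since by Theorem~\ref{thm:MS-smooth} the fixed-radius conditions \eqref{eq:kernels-R_fix} then hold.)

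For the converse ``MS formula $\Rightarrow$ iterated mass escape'': fix $R>0$ and pick any $u\in C_c^\infty(\R^N)$ with $u\not\equiv 0$ and $\supp u\subset B_{R/2}$. Exactly as in the computation leading to \eqref{eq:I-smooth} --- the supports of $u(\cdot)$ and $u(\cdot+z)$ are disjoint when $|z|>R$ --- the decomposition \eqref{eq:decomp} reads
\[
  \mathcal{F}_\varepsilon(u)=2\,\|u\|_{L^p}^p\int_{|z|>R}\rho_\varepsilon(z)\,\d z+\II_{\varepsilon,R}[u].
\]
Letting $\varepsilon\to 0$, the left-hand side tends to $2\|u\|_{L^p}^p$ by the assumed MS formula, while $\II_{\varepsilon,R}[u]\to 0$ by the preliminary observation; since $\|u\|_{L^p}^p>0$ we may divide to get $\lim_{\varepsilon\to 0}\int_{|z|>R}\rho_\varepsilon(z)\,\d z=1$. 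As $R>0$ was arbitrary, passing to $R\to\infty$ gives the iterated mass-escape condition.

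I do not expect a genuine obstacle here: the only point requiring (minor) care is the monotonicity-plus-Lemma~\ref{lemma:(ii)-smooth} reduction of the standing hypothesis to the vanishing of $\II_{\varepsilon,R}[u]$, which is exactly what allows condition \eqref{condex} to be dropped from the list of hypotheses; once this is in place, the equivalence follows from Theorem~\ref{thm:MS-smooth} by the bookkeeping above.
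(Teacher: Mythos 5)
Your proof is correct and follows exactly the route the paper intended: the authors state that this corollary is a direct consequence of the proof of Theorem~\ref{thm:MS-smooth}, and your reduction of the standing hypothesis via monotonicity and Lemma~\ref{lemma:(ii)-smooth} to the automatic vanishing of $\II_{\varepsilon,R}[u]$, followed by the decomposition \eqref{eq:decomp} and the support computation of \eqref{eq:I-smooth}, is precisely that argument.
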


\color{black}

\subsection{Corollary: The MS formula in Sobolev spaces of integer
order}\label{subsec2.2}In Theorem~\ref{thm:MS-smooth}, we have generalized the
Maz'ya--Shaposhnikova identity \eqref{eq:MS-smooth} to a significantly wider
family of kernels by working within the space $C_c^{\infty} (\R^N)$. Although this smooth, compactly supported setting strengthens the implication
(\ref{eq:MS+II-Sob}~$\Rightarrow$~\ref{cond:R-fix-Sob}), it appears at first
glance, to weaken the converse implication
(\ref{cond:R-fix-Sob}~$\Rightarrow$~\ref{eq:MS+II-Sob}). In the present
section, we demonstrate that no generality is lost under this restriction: one
may equivalently replace \ $C_c^{\infty} (\R^N)$ with the Sobolev space $W^{1,
p} (\R^N)$, again, under the broader hypotheses on the kernels stated in
equation \eqref{eq:kernels-R_fix}. This extension is a direct consequence of the well‑posedness of the corresponding energy functionals established in
Remark~\ref{lemma:inclusion-smooth}, combined with Theorem~\ref{thm:MS-smooth}.
In Section~\ref{sec:FS}, we will further show that, for the special
case of fractional kernels, our main result recovers the classical
Maz'ya--Shaposhnikova formula for the fractional Sobolev space $W^{s, p}
(\R^N)$.

\begin{corollary}[Sobolev setting]
  \label{cor:MS-sobolev} The following three conditions are equivalent:
  \begin{enumerate}
    \item \label{cond:R-fix-Sob}{\textbf{{\emph{Uniform moment
    conditions}}}}. For every fixed radius $R > 0$, as $\varepsilon
    \to 0$ the kernels $\{ \rho_{\varepsilon} \}_{\varepsilon > 0}$
    satisfy
    \begin{equation}
      \label{eq:kernels-R_fix-Sob} \lim_{\varepsilon \to 0}  \int_{|z| > R}
      \rho_{\varepsilon} (z) \hspace{0.17em} \d z = 1 \quad \text{and}
      \quad \lim_{\varepsilon \to 0}  \int_{|z| < R} |z|^p \rho_{\varepsilon}
      (z) \hspace{0.17em} \d z = 0.
    \end{equation}
    \item {\emph{{\textbf{\label{cond:R-lim-Sob}Iterated limits
    conditions.}}}} The double limits in the order ``first $\varepsilon
    \to 0$, then $R \rightarrow \infty$'' satisfy
    \begin{equation}
      \label{eq:kernels-R_lim-Sob} \lim_{R \to \infty} \lim_{\varepsilon \to
      0}  \int_{|z| > R} \rho_{\varepsilon} (z) \hspace{0.17em} \d z =
      1 \quad \text{and} \quad \lim_{R \to \infty} \lim_{\varepsilon \to 0} 
      \int_{|z| < R} |z|^p \rho_{\varepsilon} (z) \hspace{0.17em} \d z
      = 0.
    \end{equation}
    \item \label{eq:MS+II-Sob}{\emph{{\textbf{Maz'ya--Shaposhnikova formula
    in $W^{1, p} (\R^N)$.}} }}For every $u \in W^{1, p} (\R^N)$, the energy
    $\mathcal{F}_{\varepsilon} (u)$ is well-defined for $\varepsilon > 0$
    sufficiently small, and
    \begin{equation}
      \label{eq:MS-Sob} \lim_{\varepsilon \to 0} \mathcal{F}_{\varepsilon} (u)
      = 2 \|u\|^p_{L^p (\R^N)} .
    \end{equation}
    Moreover, for each $R > 0$, there holds
    \begin{equation}
      \lim_{\varepsilon \to 0}  \int_{|z| < R} \int_{\R^N}
      \rho_{\varepsilon} (z)  |u (x + z) - u (x) |^p \hspace{0.17em}
      \d x \hspace{0.17em} \d z = 0.
    \end{equation}
  \end{enumerate}
\end{corollary}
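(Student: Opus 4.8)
The plan is to deduce the corollary from Theorem~\ref{thm:MS-smooth} by a density argument, the single new ingredient being the uniform‑in‑$\varepsilon$ energy bound of Remark~\ref{lemma:inclusion-smooth}. First observe that items \ref{cond:R-fix-Sob} and \ref{cond:R-lim-Sob} here are, word for word, items \ref{cond:R-fix} and \ref{cond:R-lim} of Theorem~\ref{thm:MS-smooth}, and those two statements concern only the kernels; hence \ref{cond:R-fix-Sob}~$\Leftrightarrow$~\ref{cond:R-lim-Sob} is already part of that theorem, and it suffices to prove \ref{cond:R-fix-Sob}~$\Leftrightarrow$~\ref{eq:MS+II-Sob}. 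The implication \ref{eq:MS+II-Sob}~$\Rightarrow$~\ref{cond:R-fix-Sob} is then immediate: since $C_c^\infty(\R^N)\subseteq W^{1,p}(\R^N)$, the hypotheses in \ref{eq:MS+II-Sob} hold in particular for every $u\in C_c^\infty(\R^N)$, which is exactly item~\ref{cond:R-lim3} of Theorem~\ref{thm:MS-smooth}; that theorem then delivers \ref{cond:R-fix}.

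For the converse \ref{cond:R-fix-Sob}~$\Rightarrow$~\ref{eq:MS+II-Sob} I would argue as follows. By Remark~\ref{lemma:inclusion-smooth}, the fixed‑radius moment conditions \eqref{eq:kernels-R_fix-Sob} produce $\varepsilon_0>0$ and $c_\rho>0$ with $W^{1,p}(\R^N)\subseteq X^p_{\varepsilon_0}(\R^N)$ and
\[
\mathcal{F}_\varepsilon(v)\leqslant c_\rho\,\|v\|_{W^{1,p}}^p\qquad\text{for all }0<\varepsilon\leqslant\varepsilon_0\text{ and all }v\in W^{1,p}(\R^N),
\]
so in particular $\mathcal{F}_\varepsilon(u)$ is well-defined for $\varepsilon$ small. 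The structural point is that $v\mapsto\mathcal{F}_\varepsilon(v)^{1/p}$ is a seminorm on $W^{1,p}(\R^N)$ — it is the $L^p$-norm of the linear map $v\mapsto\bigl(v(x)-v(y)\bigr)$ with respect to the measure $\rho_\varepsilon(y-x)\,\d x\,\d y$ on $\R^N\times\R^N$ — whence, combining the triangle inequality with the bound above,
\[
\bigl|\mathcal{F}_\varepsilon(u)^{1/p}-\mathcal{F}_\varepsilon(v)^{1/p}\bigr|\leqslant\mathcal{F}_\varepsilon(u-v)^{1/p}\leqslant c_\rho^{1/p}\,\|u-v\|_{W^{1,p}}\qquad(0<\varepsilon\leqslant\varepsilon_0).
\]

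Now fix $u\in W^{1,p}(\R^N)$ and choose $u_k\in C_c^\infty(\R^N)$ with $u_k\to u$ in $W^{1,p}(\R^N)$. Given $\delta>0$, pick $k$ with $c_\rho^{1/p}\|u-u_k\|_{W^{1,p}}<\delta$ and $\|u-u_k\|_{L^p}<\delta$; inserting $\mathcal{F}_\varepsilon(u_k)^{1/p}$ and $2^{1/p}\|u_k\|_{L^p}$ and using the last display together with $\bigl|\,\|u_k\|_{L^p}-\|u\|_{L^p}\bigr|\leqslant\|u-u_k\|_{L^p}$,
\[
\bigl|\mathcal{F}_\varepsilon(u)^{1/p}-2^{1/p}\|u\|_{L^p}\bigr|\leqslant(1+2^{1/p})\,\delta+\bigl|\mathcal{F}_\varepsilon(u_k)^{1/p}-2^{1/p}\|u_k\|_{L^p}\bigr|.
\]
Since $u_k\in C_c^\infty(\R^N)$, Theorem~\ref{thm:MS-smooth} (implication \ref{cond:R-fix}~$\Rightarrow$~\ref{cond:R-lim3}) gives $\mathcal{F}_\varepsilon(u_k)\to 2\|u_k\|_{L^p}^p$ as $\varepsilon\to0$, so $\limsup_{\varepsilon\to0}\bigl|\mathcal{F}_\varepsilon(u)^{1/p}-2^{1/p}\|u\|_{L^p}\bigr|\leqslant(1+2^{1/p})\delta$; letting $\delta\to0$ and raising to the $p$-th power yields \eqref{eq:MS-Sob}. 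For the remaining identity $\lim_{\varepsilon\to0}\II_{\varepsilon,R}[u]=0$ with $R>0$ fixed, the estimate $\int_{\R^N}|u(x+z)-u(x)|^p\,\d x\leqslant|z|^p\|\nabla u\|_{L^p}^p$ is valid for every $u\in W^{1,p}(\R^N)$, so the proof of the implication \ref{i}~$\Rightarrow$~\ref{ii} of Lemma~\ref{lemma:(ii)-smooth} applies verbatim; alternatively, one repeats the density argument above, using that $\II_{\varepsilon,R}^{1/p}$ is a seminorm dominated by $\mathcal{F}_\varepsilon^{1/p}$ and that \eqref{condex} holds for the $u_k$.

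The only delicate point is conceptual rather than computational: one must recognize that the passage from $C_c^\infty(\R^N)$ to $W^{1,p}(\R^N)$ rests entirely on the fact that the moment conditions \eqref{eq:kernels-R_fix-Sob} already encode a uniform-in-$\varepsilon$ continuity of $\mathcal{F}_\varepsilon$ on $W^{1,p}(\R^N)$ (the content of Remark~\ref{lemma:inclusion-smooth}), so that no assumption on the kernels beyond those of Theorem~\ref{thm:MS-smooth} is required. Once this uniform bound and the seminorm structure of $\mathcal{F}_\varepsilon^{1/p}$ are in hand, the $3\delta$-argument is routine.
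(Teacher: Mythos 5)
Your proof is correct, and the implications \ref{cond:R-fix-Sob}~$\Leftrightarrow$~\ref{cond:R-lim-Sob} and \ref{eq:MS+II-Sob}~$\Rightarrow$~\ref{cond:R-fix-Sob}, as well as the treatment of the residual condition $\lim_{\varepsilon\to 0}\II_{\varepsilon,R}[u]=0$ via the Sobolev translation estimate, coincide with the paper's. Where you genuinely diverge is in proving $\mathcal{F}_\varepsilon(u)\to 2\|u\|_{L^p}^p$: you exploit that $v\mapsto\mathcal{F}_\varepsilon(v)^{1/p}$ is a seminorm (Minkowski in $L^p(\rho_\varepsilon(y-x)\,\d x\,\d y)$ composed with the linear map $v\mapsto v(x)-v(y)$), so the uniform bound $\mathcal{F}_\varepsilon(v)\leqslant c_\rho\|v\|_{W^{1,p}}^p$ from Remark~\ref{lemma:inclusion-smooth} yields a uniform-in-$\varepsilon$ Lipschitz estimate for $\mathcal{F}_\varepsilon^{1/p}$ on $W^{1,p}$, and the result follows by a clean $3\delta$ argument. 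The paper instead keeps the $I_{\varepsilon,R}/\II_{\varepsilon,R}$ decomposition, applies the density argument only to the tail term $I_{\varepsilon,R}$, and controls the error $I_{\varepsilon,R}[u-u_n]$ using only $\|u-u_n\|_{L^p}$ (not the full $W^{1,p}$ norm), paying for that weaker hypothesis with the parametrized inequality from the cited lemma of DiFiore--Palatucci and a three-stage passage to the limit ($R\to\infty$, $n\to\infty$, $\tau\to 0$). Your route is shorter and avoids that machinery; the paper's route is slightly more parsimonious in what it asks of the approximating sequence (only $L^p$ convergence is used for the tail term) and is organized so as to transfer almost unchanged to the fractional setting of Theorem~\ref{thm:MS-frac}. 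Both are valid, and your seminorm observation would work just as well in the fractional case by replacing the $W^{1,p}$ bound with the $W^{s,p}$ bound of Lemma~\ref{lemma:inclusion-frac}.
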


\begin{proof}
  {\textbf{(\ref{cond:R-fix-Sob} $\Rightarrow$ \ref{cond:R-lim-Sob}).}}
  Trivial by taking the limits in the prescribed order.{\smallskip}
  
  {\noindent}{\textbf{(\ref{cond:R-lim-Sob} $\Rightarrow$
  \ref{eq:MS+II-Sob})}} The well‑posedness of the associated energy
  functionals established in Remark~\ref{lemma:inclusion-smooth} immediately
  yields the first conclusion in item~\ref{eq:MS+II-Sob}. To prove the
  remaining statements, we first note that, by the monotonicity in $R > 0$, the
  second limit relation in equation \eqref{eq:kernels-R_lim-Sob} is equivalent
  to
  \begin{equation}
    \lim_{\varepsilon \to 0}  \int_{|z| < R} |z|^p \rho_{\varepsilon} (z)
    \hspace{0.17em} \d z = 0 \quad \text{for every } R > 0.
  \end{equation}
  Let $u \in W^{1, p} (\R^N)$ and decompose the nonlocal energy as in
  \eqref{eq:decomp}, i.e., $\mathcal{F}_{\veps} (u) = I_{\veps, R} [u] + I 
  I_{\veps, R} [u]$. To estimate $\II_{\veps, R} [u]$, we observe that for
  fixed $R > 0$ there holds
  \begin{align}
    \lim_{\varepsilon \to 0} \II_{\varepsilon, R} [u] & = 
    \lim_{\varepsilon \to 0}  \int_{|z| < R} \int_{\R^N}
    \rho_{\varepsilon} (z)  |u (x + z) - u (x) |^p \hspace{0.17em} \d
    x \hspace{0.17em} \d z \nonumber\\
    & \leqslant  \| \nabla u\|^p_{L^p (\R^N)} \lim_{\varepsilon
    \to 0}  \int_{|z| < R} |z|^p \rho_{\varepsilon} (z)
    \hspace{0.17em} \d z = 0.  \label{eq:II-Sob}
  \end{align}
  Thus, 
\begin{align}
  \limsup_{\varepsilon \rightarrow 0} \mathcal{F}_{\varepsilon} (u) & =
  \lim_{R \rightarrow \infty} \limsup_{\varepsilon \rightarrow 0}
  I_{\varepsilon, R} [u],\\
  \liminf_{\varepsilon \rightarrow 0} \mathcal{F}_{\varepsilon} (u) & =
  \lim_{R \rightarrow \infty} \liminf_{\varepsilon \rightarrow 0}
  I_{\varepsilon, R} [u] . 
\end{align}
  To estimate $I_{\veps, R} [u]$ we argue by density. Let $(u_n) \in
  C_c^{\infty} (\R^N)$ be a sequence such that $u_n \rightarrow u$ in
  $L^p (\R^N)$ as $n \to \infty$. 
  
  We first notice that for every $\tau>0$ there holds (see \cite[Lemma 2]{DiFio2020bmo})
\begin{align}
  I_{\varepsilon, R} [u] & \leqslant  (1 + \tau)^{p-1} I_{\varepsilon, R} [u_n] +
  \left(\frac{1 + \tau}{\tau}\right)^{p-1} I_{\varepsilon, R} [u - u_n],  \label{eq:step1}\\
  I_{\varepsilon, R} [u] & \geqslant  \frac{1}{(1 + \tau)^{p-1}} I_{\varepsilon, R}
  [u_n] - \frac{1}{\tau^{p-1}} I_{\varepsilon, R} [u - u_n] .  \label{eq:step1b}
\end{align}
  We observe that
\begin{align}
  I_{\varepsilon, R} [u - u_n] & =  \int_{|z| > R}
  \rho_{\varepsilon} (z)  \int_{x \in \R^N} |u (x + z) - u_n (x + z) +
  u_n (x) - u (x) |^p \d  x\,\d  z \nonumber\\
  & \leqslant 2^{p-1}\int_{|z| > R} \rho_{\varepsilon} (z)  \int_{x \in
  \R^N} |u (x + z) - u_n (x + z) |^p \d  x\,\d  z \nonumber\\
  &  \qquad \qquad \qquad \qquad + 2^{p-1}\int_{|z| > R} \rho_{\varepsilon} (z) 
  \int_{x \in \R^N} |u (x) - u_n (x) |^p \d  x\,\d  z \nonumber\\
  & =  2^p \| u - u_n \|^p_{L^p} \int_{|z| > R} \rho_{\varepsilon} (z)\, \d z . 
  \label{eq:step2}
\end{align}
  Proceeding as in the proof of
  Theorem~\ref{thm:MS-smooth} (cf. \eqref{eq:I-smooth}), we find that for every $n \in \N$,
  \begin{equation}
  \label{eq:step3} I_{\varepsilon, R} (u_n) = 2 \|u_n
     \|_{L^p}^p   \int_{|z| > R_n}
     \rho_{\varepsilon} (z) \hspace{0.17em} \d z, 
     \end{equation}
  for every $R> 2 R_n$, where the latter radius is chosen so that $\supp u_n \subseteq B_{R_n}$.

By \eqref{eq:step1}--\eqref{eq:step1b}, together with \eqref{eq:step2} and \eqref{eq:step3}, we obtain that the following estimates hold
\begin{align}
  I_{\varepsilon, R} [u] & \leqslant  \left( 2(1 + \tau)^{p-1} \|u_n \|^p_{L^p} + 2^p
  \left(\frac{1 + \tau}{\tau}\right)^{p-1} \| u - u_n \|^p_{L^p} \right) \int_{|z| > R}
  \rho_{\varepsilon} (z)\, \d  z,\\
  I_{\varepsilon, R} [u] & \geqslant  \left( \frac{2}{(1 + \tau)^{p-1}} \|u_n
  \|^p_{L^p} - \frac{2^p}{\tau^{p-1}} \| u - u_n \|^p_{L^p} \right) \int_{|z| > R}
  \rho_{\varepsilon} (z) \, \d  z, 
\end{align}
for every $R>2R_n$.
Assume now that $n$ is big enough, so that 
\begin{equation}
\left( \frac{2}{(1 + \tau)^{p-1}} \|u_n
  \|^p_{L^p} - \frac{2^p}{\tau^{p-1}} \| u - u_n \|^p_{L^p} \right)\geqslant 0.
\end{equation}
From the previous two relations, we obtain
\begin{align}
  \limsup_{\varepsilon \rightarrow 0} I_{\varepsilon, R} [u] & \leqslant 
  \left( 2(1 + \tau)^{p-1} \|u_n \|^p_{L^p} + 2^p
  \left(\frac{1 + \tau}{\tau}\right)^{p-1}  \| u - u_n
  \|^p_{L^p} \right)  \nonumber\\
  &\qquad \qquad \qquad \qquad \qquad \qquad\cdot \limsup_{\varepsilon \rightarrow 0}  \int_{|z| > R} \rho_{\varepsilon} (z) \, \d  z, \\
  \liminf_{\varepsilon \rightarrow 0} I_{\varepsilon, R} [u] & \geqslant 
   \left( \frac{2}{(1 + \tau)^{p-1}} \|u_n
  \|^p_{L^p} - \frac{2^p}{\tau^{p-1}} \| u - u_n \|^p_{L^p} \right) \nonumber\\
  &\qquad \qquad \qquad \qquad \qquad \qquad\cdot \liminf_{\varepsilon \rightarrow 0}
  \int_{|z| > R} \rho_{\varepsilon} (z)\, \d  z, 
\end{align}
for every $R>2R_n$.
Taking the limit for $R \rightarrow \infty$, we infer
\begin{align}\nonumber
  \limsup_{R\to \infty}\limsup_{\varepsilon \rightarrow 0} I_{\varepsilon, R} [u]  
  &\leqslant 
  \left( 2(1 + \tau)^{p-1} \|u_n \|^p_{L^p} + 2^p
  \left(\frac{1 + \tau}{\tau}\right)^{p-1}  \| u - u_n
  \|^p_{L^p} \right) \\
  &\qquad \qquad \qquad \qquad \qquad \qquad \cdot \limsup_{R\to \infty}\limsup_{\varepsilon \rightarrow 0}  \int_{|z| > R} \rho_{\varepsilon} (z) \, \d  z, \\
  \liminf_{R\to \infty}\liminf_{\varepsilon \rightarrow 0} I_{\varepsilon, R} [u] & \geqslant 
   \left( \frac{2}{(1 + \tau)^{p-1}} \|u_n
  \|^p_{L^p} - \frac{2^p}{\tau^{p-1}} \| u - u_n \|^p_{L^p} \right)  \nonumber\\
   &\qquad \qquad \qquad \qquad \qquad \qquad\cdot  \liminf_{R\to \infty}\liminf_{\varepsilon \rightarrow 0}
  \int_{|z| > R} \rho_{\varepsilon} (z)\, \d z.
\end{align}
Thus, passing first to the limit for $n \rightarrow \infty$ and then for $\tau
\rightarrow 0$ we conclude
\begin{align}
  \label{eq:need1} \limsup_{R \rightarrow \infty} \limsup_{\varepsilon \rightarrow 0}
  I_{\varepsilon, R} [u] & \leqslant  2 \|u\|^p_{L^p}\limsup_{R\to \infty}\limsup_{\varepsilon \rightarrow 0}  \int_{|z| > R} \rho_{\varepsilon} (z) \, \d  z, \\
  \label{eq:need2}\liminf_{R \rightarrow \infty} \liminf_{\varepsilon \rightarrow 0}
  I_{\varepsilon, R} [u] & \geqslant   2 \|u  \|^p_{L^p}\liminf_{R\to \infty}\liminf_{\varepsilon \rightarrow 0}  \int_{|z| > R} \rho_{\varepsilon} (z) \, \d  z,
\end{align}
which, in turn, by \eqref{eq:kernels-R_lim-Sob} yields 
\begin{equation}
\label{eq:I-Sob}
\lim_{R\to +\infty}\lim_{\eps\to 0} I_{\eps,R}[u]= 2 \|u  \|^p_{L^p}.
\end{equation}
  Finally, combining \eqref{eq:II-Sob} and \eqref{eq:I-Sob}, we infer that
\begin{equation} 
\lim_{\varepsilon \to 0} \mathcal{F}_{\varepsilon} (u) = 2
     \|u\|_{L^p}^p \lim_{R \rightarrow \infty} \lim_{\varepsilon \rightarrow
     0}  \int_{|z| > R} \rho_{\varepsilon} (z) \hspace{0.17em} \d z =
     2 \|u\|_{L^p}^p .
\end{equation}
  This proves the implication (\ref{cond:R-lim-Sob} $\Rightarrow$
  \ref{eq:MS+II-Sob}).{\smallskip}
  
  {\noindent}{\textbf{(\ref{eq:MS+II-Sob} $\Rightarrow$
  \ref{cond:R-fix-Sob}).}} Since relation \eqref{eq:MS-Sob} holds in
  particular for any $u \in C^{\infty}_c (\R^N)$, the implication
  (\ref{eq:MS+II-Sob} $\Rightarrow$ \ref{cond:R-fix-Sob}) proved in
  Theorem~\ref{thm:MS-smooth} applies. This completes the proof.
\end{proof}

\section{The case of fractional Sobolev spaces}\label{sec:FS}

In this section, we analyze nonlocal energy functionals acting on fractional
Sobolev spaces $W^{s, p} (\R^N)$ with exponent $s \in (0, 1)$.
Unlike the integral‑order case, the weaker regularity of functions in $W^{s,
p} (\R^N)$ requires a corresponding adaptation of the hypotheses
on the kernel family $\{ \rho_{\varepsilon} \}_{\varepsilon > 0}$. Under these
modified assumptions, one obtains a direct analog of the classical
Maz'ya--Shaposhnikova convergence formula, now valid in the fractional regime.

\begin{theorem}[Fractional Maz'ya--Shaposhnikova formula]
  \label{thm:MS-frac}Let $p \in [1, \infty)$ and $s \in (0, 1)$. Suppose the
  kernels $\rho_{\varepsilon} : \R^N \rightarrow [0, \infty)$ satisfy
  \begin{equation}
    \lim_{R \to \infty} \lim_{\varepsilon \to 0}  \int_{|z| > R}
    \rho_{\varepsilon} (z) \hspace{0.17em} \d z = 1 \quad \text{and}
    \quad \lim_{R \to \infty} \lim_{\varepsilon \to 0}  \int_{|z| < R}
    |z|^{sp} \rho_{\varepsilon} (z) \hspace{0.17em} \d z = 0.
    \label{eq:kernels-R_lim-frac}
  \end{equation}
  Then, for every $u \in W^{s, p} (\R^N)$,
  \begin{equation}
    \label{eq:MS-frac} \lim_{\varepsilon \to 0} \mathcal{F}_{\varepsilon} (u)
    = 2 \|u\|^p_{L^p (\R^N)} .
  \end{equation}
  Moreover, for each $R > 0$,
  \begin{equation}
    \label{eq:II-frac} \lim_{\varepsilon \to 0} II_{\varepsilon} [u] =
    \lim_{\varepsilon \to 0}  \int_{|z| < R} \int_{\R^N}
    \rho_{\varepsilon} (z)  |u (x + z) - u (x) |^p \hspace{0.17em} \d
    x \hspace{0.17em} \d z = 0.
  \end{equation}
\end{theorem}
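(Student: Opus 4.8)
The plan is to follow the same architecture as the proofs of Theorem~\ref{thm:MS-smooth} and Corollary~\ref{cor:MS-sobolev}, based on the splitting $\mathcal{F}_\varepsilon(u) = I_{\varepsilon,R}[u] + \II_{\varepsilon,R}[u]$ of \eqref{eq:decomp}, with the Lipschitz-type control $\|u(\cdot+z)-u\|_{L^p}\le |z|\,\|\nabla u\|_{L^p}$ used in the integer-order case replaced by its fractional counterpart. As a preliminary reduction, since $R\mapsto\int_{|z|<R}|z|^{sp}\rho_\varepsilon(z)\,\d z$ is nondecreasing, the second requirement in \eqref{eq:kernels-R_lim-frac} is equivalent to
\[
\lim_{\varepsilon\to0}\int_{|z|<R}|z|^{sp}\rho_\varepsilon(z)\,\d z = 0\qquad\text{for every } R>0,
\]
exactly as in Section~\ref{sec:2}.

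The crucial new ingredient is the pointwise-in-$z$ bound
\[
\omega_u(z):=\int_{\R^N}|u(x+z)-u(x)|^p\,\d x \;\le\; C(N,s,p)\,|z|^{sp}\,[u]_{W^{s,p}(\R^N)}^p,\qquad u\in W^{s,p}(\R^N),\ z\in\R^N,
\]
where $[u]_{W^{s,p}}^p=\int_{\R^N}\omega_u(h)\,|h|^{-(N+sp)}\,\d h$ is the Gagliardo seminorm; equivalently, the embedding $W^{s,p}(\R^N)\hookrightarrow B^{s}_{p,\infty}(\R^N)$. I would prove this directly: the map $z\mapsto\omega_u(z)^{1/p}=\|u(\cdot+z)-u\|_{L^p}$ is even and subadditive, so for $|z|=r$ and $|k|<r/2$ one gets $\omega_u(z)\le 2^{p-1}\bigl(\omega_u(z-k)+\omega_u(k)\bigr)$ by convexity of $t\mapsto t^p$; integrating this over $k\in B_{r/2}$, dividing by $|B_{r/2}|$, and estimating the two resulting integrals by $[u]_{W^{s,p}}^p$ via $\omega_u(h)\le |h|^{N+sp}\,\omega_u(h)|h|^{-(N+sp)}$ together with $|h|\le r/2$ on $B_{r/2}(0)$ and $|h|\le 3r/2$ on $B_{r/2}(z)$, yields the stated estimate with a dimensional constant. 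This is the only point where the fractional exponent $s$ genuinely enters; although the fact is classical, verifying it cleanly is the step I expect to be the main technical point.

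Granting this bound, the near-origin term is controlled immediately: for every fixed $R>0$,
\[
0\le \II_{\varepsilon,R}[u]=\int_{|z|<R}\rho_\varepsilon(z)\,\omega_u(z)\,\d z\;\le\; C(N,s,p)\,[u]_{W^{s,p}}^p\int_{|z|<R}|z|^{sp}\rho_\varepsilon(z)\,\d z\;\longrightarrow\;0\quad(\varepsilon\to0),
\]
which is precisely \eqref{eq:II-frac}; in particular $\II_{\varepsilon,R}[u]<\infty$ for $\varepsilon$ small, and together with $I_{\varepsilon,R}[u]\le 2^p\|u\|_{L^p}^p\int_{|z|>R}\rho_\varepsilon\,\d z<\infty$ (finite for small $\varepsilon$, since the first relation in \eqref{eq:kernels-R_lim-frac} forces $\int_{|z|>R}\rho_\varepsilon\,\d z$ to have a finite limit) this shows $\mathcal{F}_\varepsilon(u)$ is well defined for $\varepsilon$ small.

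It remains to treat the far term $I_{\varepsilon,R}[u]$, and here I would reuse verbatim the density argument from the proof of Corollary~\ref{cor:MS-sobolev}, which only uses $u\in L^p(\R^N)$: choose $u_n\in C_c^\infty(\R^N)$ with $u_n\to u$ in $L^p$, combine the elementary inequalities \eqref{eq:step1}--\eqref{eq:step1b} with the bound $I_{\varepsilon,R}[u-u_n]\le 2^p\|u-u_n\|_{L^p}^p\int_{|z|>R}\rho_\varepsilon\,\d z$ and with the identity $I_{\varepsilon,R}[u_n]=2\|u_n\|_{L^p}^p\int_{|z|>R}\rho_\varepsilon\,\d z$, valid once $R$ is large enough that $\supp u_n\subset B_{R/2}$ (cf.\ \eqref{eq:I-smooth} and \eqref{eq:step3}), and then pass to the limits in the order $\varepsilon\to0$, $R\to\infty$ (using the first relation in \eqref{eq:kernels-R_lim-frac}), $n\to\infty$, $\tau\to0$, obtaining $\lim_{R\to\infty}\lim_{\varepsilon\to0}I_{\varepsilon,R}[u]=2\|u\|_{L^p}^p$. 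Finally, since $\II_{\varepsilon,R}\ge0$ and $\lim_{\varepsilon\to0}\II_{\varepsilon,R}[u]=0$ for each $R$, the splitting $\mathcal{F}_\varepsilon(u)=I_{\varepsilon,R}[u]+\II_{\varepsilon,R}[u]$ gives $\liminf_{\varepsilon\to0}\mathcal{F}_\varepsilon(u)\ge\lim_{R\to\infty}\liminf_{\varepsilon\to0}I_{\varepsilon,R}[u]=2\|u\|_{L^p}^p$ and $\limsup_{\varepsilon\to0}\mathcal{F}_\varepsilon(u)\le\lim_{R\to\infty}\limsup_{\varepsilon\to0}I_{\varepsilon,R}[u]=2\|u\|_{L^p}^p$, which yields \eqref{eq:MS-frac} and completes the argument.
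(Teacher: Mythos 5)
Your proposal is correct and follows essentially the same architecture as the paper's proof: split $\mathcal{F}_\varepsilon(u)=I_{\varepsilon,R}[u]+\II_{\varepsilon,R}[u]$, kill the near-origin term via a fractional translation estimate together with the second moment condition, and pass to the limit in the far term by density from $C_c^\infty$ exactly as in Corollary~\ref{cor:MS-sobolev}. The two small deviations are cosmetic and if anything slightly sharper than the paper's treatment: you prove the Besov-type bound $\|u(\cdot+z)-u\|_{L^p}\leqslant C|z|^s[u]_{W^{s,p}}$ directly (and in terms of the seminorm alone, for all $z$) where the paper invokes \cite[Lemma 6.14]{L23} for $|z|<1/2$ and patches in $\|u\|_{L^p}$ for $|z|\geqslant1/2$, and you note that $L^p$-convergence of the approximating sequence suffices in the density step, whereas the paper's proof of Theorem~\ref{thm:MS-frac} (unnecessarily) takes $u_n\to u$ in $W^{s,p}$.
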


Before proceeding with the proof we make some remarks.
\begin{remark} 
Note that, as we observed in Theorem~\ref{thm:MS-smooth}, by monotonicity in 
$R > 0$, the second condition in \eqref{eq:kernels-R_lim-frac} is equivalent 
to 
\[ \lim_{\varepsilon \to 0} \int_{|z| < R} |z|^{sp} \rho_{\varepsilon} (z) 
\hspace{0.17em} \d z = 0 \quad \text{for every } R > 0. \] 
However, no similar simplification applies to the first condition: although 
\begin{equation} 
\lim_{\varepsilon \to 0} \int_{|z| > R} \rho_{\varepsilon} (z) 
\hspace{0.17em} \d z = 1 \quad \text{for every } R > 0 
\label{eq:tempj} 
\end{equation} 
 implies the first limit in \eqref{eq:kernels-R_lim-frac}, 
the converse need not hold a priori.
\end{remark}
\begin{remark}[An open question]
  \label{rmk:genuinegen} The classical family of {\emph{fractional}} kernels
  \begin{equation}
    \label{eq:frac-kernel} \rho_{\varepsilon} (z) = \frac{\varepsilon p}{|
    \mathbb{S}^{N - 1} |} |z|^{- (N + \varepsilon p)}  \quad \text{with }
    \veps \in (0, 1)
  \end{equation}
  satisfies condition \eqref{eq:kernels-R_lim-frac}. Indeed, for each $s \in (0, 1)$, one checks directly that if $\varepsilon
  \leqslant s$, then as $\varepsilon \to 0$ both the limit relations
  in \eqref{eq:kernels-R_lim-frac} are satisfied. Thus,
  Theorem~\ref{thm:MS-frac} indeed recovers the classical
  Maz'ya--Shaposhnikova convergence formula while allowing any kernel family
  satisfying the more abstract conditions \eqref{eq:kernels-R_lim-frac}. In
  this sense, our Theorem~\ref{thm:MS-frac} is a genuine generalization of the
  fractional Maz'ya--Shaposhnikova formula.
  
  Moreover, when $u$ has higher regularity---either $u \in C^{\infty}$
  (Theorem~\ref{thm:MS-smooth}) or $u \in W^{1, p}$
  (Corollary~\ref{cor:MS-sobolev})---one can accommodate kernels that are even
  {\emph{more singular}} at the origin. This reflects the fact that greater
  function smoothness permits weaker tail assumptions on $\{\rho_{\varepsilon}\}_{\varepsilon>0}$.
  
  Despite this generality, the fractional result lacks the full equivalence
  found in the integral‑order case. 
  It remains an open question whether the condition 
  \[ \lim_{\varepsilon \to 0}
     {II}_{\varepsilon, R} [u] = 0 \quad \forall \text{} R > 0,
     \, \forall u \in W^{s, p}(\R^N), \]
  already implies the second requirement in \eqref{eq:kernels-R_lim-frac} without placing further restrictions on the class of kernels.  A positive answer would provide a complete converse to Theorem~\ref{thm:MS-frac}, mirroring the equivalences of Theorem~\ref{thm:MS-smooth}. In Theorem~\ref{3-Thm2} below, we take a first step in that direction by proving the converse under additional hypotheses on the admissible kernels; our partial converse for admissible kernels pinpoints where those difficulties arise and indicates natural directions for further work.
\end{remark}

In the following lemma, we verify that the nonlocal energies
are well-defined on $W^{s, p} (\R^N)$ under \eqref{eq:kernels-R_lim-frac}.

\begin{lemma}[Well‑Definedness of the Nonlocal Energies in $W^{s, p}$]
  \label{lemma:inclusion-frac} Let $p \in [1,  \infty)$ and $s \in (0, 1)$.
  Assume that the kernels $\{ \rho_{\varepsilon} \}$ satisfy the two limit
  relations in {\emph{\eqref{eq:kernels-R_lim-frac}}}. Then, there exists
  $\veps_0 > 0$ such that, for all $0 < \varepsilon < \varepsilon_0$ the
  continuous embedding {\emph{(cf.\ \eqref{f-space})}}
  \begin{equation}
    \label{eq:coninclfrac} W^{s, p} (\R^N) \hookrightarrow X^p_{\eps_0} (\R^N)
   = \bigcap_{\eps \leqslant \eps_0} \mathcal{X}^p_{\eps} (\R^N) 
  \end{equation}
  holds. More precisely, there is a constant $c_{\rho} > 0$, depending only on $\{
  \rho_{\varepsilon} \}$, such that for all $0 < \varepsilon \leqslant
  \varepsilon_0$
  \[ \mathcal{F}_{\varepsilon} (u) \leqslant c_{\rho} \|u\|^p_{W^{s, p}} \quad
     \forall \, u \in W^{s, p} (\R^N) . \]
\end{lemma}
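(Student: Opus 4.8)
The strategy is to bound $\mathcal{F}_\varepsilon(u)$ by splitting the $z$-integral at a fixed radius, say $R=1$, exactly as in the decomposition \eqref{eq:decomp}, and to control each piece by a fixed multiple of $\|u\|_{W^{s,p}}^p$ uniformly for small $\varepsilon$. First I would fix $R=1$ and use \eqref{eq:kernels-R_lim-frac} (together with the monotonicity remark) to produce $\varepsilon_0>0$ and a constant $C_0$ such that, for all $0<\varepsilon\le\varepsilon_0$,
\[
\int_{|z|>1}\rho_\varepsilon(z)\,\d z \le C_0
\qquad\text{and}\qquad
\int_{|z|<1}|z|^{sp}\rho_\varepsilon(z)\,\d z \le C_0 .
\]
This is the only place the kernel hypotheses enter, and it is immediate from the iterated-limit conditions since each inner limit is finite (equal to $1$ and $0$ respectively at $R=1$), hence the integrals are bounded for $\varepsilon$ small.

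Next I would estimate the far part $I_{\varepsilon,1}[u]$. Here one uses the crude bound $|u(x+z)-u(x)|^p\le 2^{p-1}\bigl(|u(x+z)|^p+|u(x)|^p\bigr)$ and Fubini to get
\[
I_{\varepsilon,1}[u]=\int_{|z|>1}\rho_\varepsilon(z)\int_{\R^N}|u(x+z)-u(x)|^p\,\d x\,\d z
\le 2^p\|u\|_{L^p}^p\int_{|z|>1}\rho_\varepsilon(z)\,\d z \le 2^pC_0\|u\|_{L^p}^p,
\]
using translation invariance of Lebesgue measure. Since $\|u\|_{L^p}\le\|u\|_{W^{s,p}}$, this piece is controlled.

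The near part $II_{\varepsilon,1}[u]$ is the only genuinely fractional step and is the main point of the lemma. Here the first-order Taylor/Sobolev estimate used in the smooth and $W^{1,p}$ cases is unavailable; instead one invokes the fractional Gagliardo seminorm. The key elementary fact is that for $u\in W^{s,p}(\R^N)$ and any $z$,
\[
\int_{\R^N}|u(x+z)-u(x)|^p\,\d x \;\le\; |z|^{sp}\,[u]_{W^{s,p}(\R^N)}^p,
\]
which follows by writing, for each fixed $z$, $\int_{\R^N}|u(x+z)-u(x)|^p\,\d x = |z|^{N+sp}\int_{\R^N}\frac{|u(x+z)-u(x)|^p}{|z|^{N+sp}}\,\d x$ and integrating this identity over the sphere of radius $|z|$ (equivalently, averaging the Gagliardo double integral); more precisely one shows $\int_{S}\int_{\R^N}|u(x+ r\omega)-u(x)|^p\,\d x\,\d\mathcal{H}^{N-1}(\omega) \le r^{N-1+sp}[u]_{W^{s,p}}^p$ by the substitution $y=x+r\omega$ inside the Gagliardo integral. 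I would state this as a short standalone estimate (or cite the standard fractional-translation inequality). Granting it,
\[
II_{\varepsilon,1}[u]=\int_{|z|<1}\rho_\varepsilon(z)\int_{\R^N}|u(x+z)-u(x)|^p\,\d x\,\d z
\le [u]_{W^{s,p}}^p\int_{|z|<1}|z|^{sp}\rho_\varepsilon(z)\,\d z \le C_0\,[u]_{W^{s,p}}^p.
\]
Combining the two bounds gives $\mathcal{F}_\varepsilon(u)\le c_\rho\|u\|_{W^{s,p}}^p$ with $c_\rho := C_0\max\{2^p,1\}$ (or simply $c_\rho=2^pC_0$), valid for all $0<\varepsilon\le\varepsilon_0$, which yields both $u\in\mathcal{X}^p_\varepsilon(\R^N)$ for every such $\varepsilon$ — hence the embedding $W^{s,p}(\R^N)\hookrightarrow X^p_{\varepsilon_0}(\R^N)$ — and the claimed continuity estimate.

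The one step requiring care is the fractional translation inequality $\|\tau_z u-u\|_{L^p}^p\le|z|^{sp}[u]_{W^{s,p}}^p$: it is standard but the sharp constant (here taken to be $1$, which suffices) depends on how one normalizes the Gagliardo seminorm, so I would either fix the convention explicitly or absorb any dimensional constant into $c_\rho$. Everything else is Fubini and the elementary convexity inequality for $|a-b|^p$, so no serious obstacle remains.
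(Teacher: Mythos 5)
Your overall strategy---split the nonlocal energy at a fixed finite radius, bound the far part by $L^p$ translation invariance and the near part by the fractional translation estimate, then invoke the kernel hypotheses to produce a uniform $\varepsilon_0$ and a uniform constant---is exactly the route the paper takes, so the architecture is fine. The far-range estimate and the extraction of $\varepsilon_0,C_0$ from \eqref{eq:kernels-R_lim-frac} are correct.

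The problem is the justification you offer for the key inequality
\[
\int_{\R^N}|u(x+z)-u(x)|^p\,\d x \;\le\; |z|^{sp}\,[u]_{W^{s,p}}^p .
\]
Your ``integrate over the sphere of radius $|z|$'' argument does not prove it, and cannot: if we set $g(r):=\int_{S^{N-1}}\int_{\R^N}|u(x+r\omega)-u(x)|^p\,\d x\,\d\sigma(\omega)$, then passing to polar coordinates in the Gagliardo seminorm gives $[u]_{W^{s,p}}^p=\int_0^\infty r^{-1-sp}g(r)\,\d r$, i.e.\ the seminorm controls the \emph{weighted integral in $r$} of $g$, not $g$ at a single radius $r=|z|$. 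Your displayed intermediate claim $g(r)\le r^{sp}[u]_{W^{s,p}}^p$ is precisely the assertion that an integrand is bounded by its own integral, which is false for general nonnegative $g$. The genuine proof of the translation inequality uses the subadditivity $\|\tau_{z_1+z_2}u-u\|_{L^p}\le\|\tau_{z_1}u-u\|_{L^p}+\|\tau_{z_2}u-u\|_{L^p}$ (a chaining/averaging argument) to turn the integral bound into a pointwise one, and it necessarily produces a constant $c=c(N,s,p)>1$ in front; moreover the estimate with the seminorm alone holds only for $|z|$ bounded (e.g.\ $|z|<1/2$ in the reference the paper uses), while for larger $|z|$ you must fall back on $\|\tau_z u-u\|_{L^p}\le 2\|u\|_{L^p}$. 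The paper handles this by writing the combined bound $\|\tau_z u - u\|_{L^p}\le(2+c)\min\{1,|z|^s\}\|u\|_{W^{s,p}}$ and integrating against $\rho_\varepsilon$. Since you plan to split at $R=1$, you would also have to extend the cited small-translation estimate past $|z|=1/2$ (or split at $R=1/2$). Your parenthetical ``or cite the standard fractional-translation inequality'' is the right move; the sketch that replaces it is not correct, and the normalization caveat you raise is a side issue compared with the fact that the sketch proves nothing.
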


\begin{proof}
  Let us denote by $[u]^p_{W^{s, p}}$ the fractional seminorm, defined by
  \begin{equation}
    [u]^p_{W^{s, p}}:= \iint_{\R^N \times \R^N} \frac{|u
    (x) - u (y) |^p}{|x - y|^{N + sp}} \hspace{0.17em} \d x
    \hspace{0.17em} \d y
  \end{equation}
  and by $\| u \|_{W^{s, p}}:= \| u \|_{L^p} + [u]_{W^{s, p}}$ the
  associated fractional norm.
  
  We divide the proof into two steps.{\smallskip}
  
  {\noindent}{\textbf{Step~1.}}
  First, we show that for every $z \in \R^N$, there holds
  \begin{equation}
    \label{ineq-frac} \| \tau_z u - u\|_{L^p} \leqslant c |z|^s \| u \|_{W^{s,
    p}}
  \end{equation}
  for some positive constant $c > 0$ that depends only on $s, p$. For that,
  recall (see e.g., {\cite[Lemma 6.14]{L23}}) that there exists a constant $c
  > 0$, that depends only on $s, p$, such that for all $0 < |z| < 1 / 2$ there
  holds
  \begin{equation}
    \label{ineq-fracold1} \| \tau_z u - u\|_{L^p} \leqslant c |z|^s 
    [u]_{W^{s, p}} .
  \end{equation}
  Now, if $| z | \geqslant 1 / 2$ then $(2 | z |)^s \geqslant 1$ and,
  therefore,
  \begin{equation}
    \| \tau_z u - u \|_{L^p} \, \leqslant 2 \text{} \| u \|_{L^p}
    \leqslant 2^{s+1}  | z |^s \| u \|_{L^p} \leqslant 2^{s+1}  | z |^s \| u \|_{W^{s,
    p}} . \label{ineq-fracold2}
  \end{equation}
  Combining \eqref{ineq-fracold1} and \eqref{ineq-fracold2} we get
  \eqref{ineq-frac}.{\smallskip}
  
  {\noindent}{\textbf{Step~2.}} We now show \eqref{eq:coninclfrac}. By
  assumptions \eqref{eq:kernels-R_lim-frac}, there exists some $\varepsilon_0
  > 0$ such that for every $\varepsilon \leqslant \varepsilon_0$
  \[ \int_{\R^N} \min \{1, |z|^{sp} \} \rho_{\varepsilon} (z) \hspace{0.17em}
     \d z < + \infty . \]
  Let $u \in W^{s, p} (\R^N)$. Then, for every $z \in \R^N$, using
  \eqref{ineq-frac}, the following inequalities hold
  \[ \| \tau_z u - u\|_{L^p} \leqslant 2 \|u\|_{L^p (\R^N)} \leqslant (2 + c)
     \| u \|_{W^{s, p}}, \]
  \[ \| \tau_z u - u\|_{L^p} \leqslant (2 + c)  |z|^s \| u \|_{W^{s, p}} .
  \]
  Therefore, combining the two previous estimates, we get
  \[ \| \tau_z u - u\|_{L^p} \leqslant (2 + c) \min \{1, |z|^s \} \|u\|_{W^{s,
     p}} . \]
  Elevating both sides to the power $p$ and integrating in $z$ against
  $\rho_{\varepsilon} (z) \d z$, we obtain that
  \[ \mathcal{F}_{\varepsilon} (u) \leqslant \|u\|^p_{W^{s, p}}  \int_{\R^N}
     \min \{1, |z|^{sp} \} \rho_{\varepsilon} (z) \hspace{0.17em} \d z
     < + \infty \]
  for every $\varepsilon \leqslant \varepsilon_0$. This proves that $u \in
  \mathcal{X}^p_{\varepsilon} (\R^N)$ for every $\varepsilon \leqslant
  \varepsilon_0$.
\end{proof}

\subsection{Proof of Theorem~\ref{thm:MS-frac}}In this subsection, we prove Theorem~\ref{thm:MS-frac}.

By monotonicity in $R > 0$, the second condition in
\eqref{eq:kernels-R_lim-frac} is equivalent to
\begin{equation}
  \label{eq:cond-equiv} \lim_{\varepsilon \to 0}  \int_{|z| < R} | z |^{sp}
  \rho_{\varepsilon} (z) \hspace{0.17em} \d z = 0 \quad \text{for
  every } R > 0.
\end{equation}
Fix $u \in W^{s, p} (\R^N)$ and any $R > 0$. Decompose the nonlocal energy as
in \eqref{eq:decomp}, i.e., $\mathcal{F}_{\veps} (u) = I_{\veps, R} [u] + I 
I_{\veps, R} [u]$. Recalling the translation‑difference estimate
\eqref{ineq-frac}, we have that
\begin{eqnarray}
  \lim_{\varepsilon \to 0} \II_{\varepsilon, R} [u] & = &
  \lim_{\varepsilon \to 0}  \int_{|z| < R} \int_{\R^N}
  \rho_{\varepsilon} (z)  |u (x + z) - u (x) |^p \hspace{0.17em} \d x
  \hspace{0.17em} \d z \nonumber\\
  & \leqslant & c \| u \|_{W^{s, p}}^p \lim_{\varepsilon \to 0} 
  \int_{|z| < R} |z|^{sp} \rho_{\varepsilon} (z) \hspace{0.17em} \d z
  = 0.  \label{eq:II-fractional}
\end{eqnarray}
Thus, 
\begin{align}
  \limsup_{\varepsilon \rightarrow 0} \mathcal{F}_{\varepsilon} (u) & =
  \lim_{R \rightarrow \infty} \limsup_{\varepsilon \rightarrow 0}
  I_{\varepsilon, R} [u], \nonumber\\
  \liminf_{\varepsilon \rightarrow 0} \mathcal{F}_{\varepsilon} (u) & =
  \lim_{R \rightarrow \infty} \liminf_{\varepsilon \rightarrow 0}
  I_{\varepsilon, R} [u] . \nonumber
\end{align}
  Next, to estimate $I_{\veps, R} [u]$ we argue by density as in the proof of Corollary \ref{cor:MS-sobolev}. Let $(u_n) \in C_c^{\infty} (\R^N)$ be a sequence such that $u_n
\rightarrow u$ in $W^{s, p} (\R^N)$ as $n \to \infty$, (see, e.g.,
{\cite[Theorem 6.66]{L23}}). We have that for every $\tau>0$ there holds (see again \cite[Lemma 2]{DiFio2020bmo})
\begin{eqnarray}
  I_{\varepsilon, R} [u] & \leqslant & (1 + \tau)^{p-1} I_{\varepsilon, R} [u_n] +
  \left(\frac{1 + \tau}{\tau}\right)^{p-1} I_{\varepsilon, R} [u - u_n],  \label{eq:step1-frac}\\
  I_{\varepsilon, R} [u] & \geqslant & \frac{1}{(1 + \tau)^{p-1}} I_{\varepsilon, R}
  [u_n] - \frac{1}{\tau^{p-1}} I_{\varepsilon, R} [u - u_n] .  \label{eq:step1b-frac}
\end{eqnarray}

The conclusion then follows by arguing exactly as in the proof of
Corollary~\ref{cor:MS-sobolev}. 

\subsection{Towards necessary conditions in the fractional Sobolev-spaces setting}
In this final subsection, we show that, under an extra hypothesis on the
admissible kernels $\{\rho_{\varepsilon} \}$, the conditions introduced in Theorem~\ref{thm:MS-frac} are not only sufficient but also necessary for the
validity of a Maz'ya--Shaposhnikova type formula.

We begin by defining the class of kernels that we will consider.

\begin{definition}[Admissible families of kernels]
  \label{classA} A family of non-negative measurable kernels $\{\rho_{\varepsilon}
\}_{\varepsilon > 0}$ belongs to the class $\mathcal{A}_{s, p}$, for $p > 1$
and $0 < s < 1$, if the following normalization condition holds:
\[ \lim_{\varepsilon \to 0}  \int_{\R^N} (1 \wedge |z|^{sp})
   \rho_{\varepsilon} (z) \hspace{0.17em} \mathrm{d} z = 1, \]
where $1 \wedge |z|^{sp} = \min (1, |z|^{sp})$.

A family of kernels $\{\rho_{\varepsilon} \}_{\varepsilon > 0}$ satisfies the
\emph{uniform moment conditions} if for every fixed radius $R > 0$, the following two
limits hold as $\varepsilon \to 0$:
\begin{enumerate}
  \item \emph{Mass Escape:}
  \begin{equation} \label{eq:massescape}
   \lim_{\varepsilon \to 0}  \int_{|z| > R}
  \rho_{\varepsilon} (z) \hspace{0.17em} \mathrm{d} z = 1.
\end{equation} 
  \item \emph{Short-Range Attenuation:}
    \begin{equation} \label{eq:sratten}
   \lim_{\varepsilon \to 0}  \int_{|z|
  < R} |z|^{sp} \rho_{\varepsilon} (z) \hspace{0.17em} \mathrm{d} z = 0.
\end{equation} 
\end{enumerate}
\end{definition}

\begin{remark}
  Fractional-type kernels of the form
  \[ \rho_{\varepsilon} (z) := \frac{\varepsilon p}{|\mathbb{S}^{N - 1} |}
     \hspace{0.17em} |z|^{- (N + \varepsilon p)} \]
   belong to $\mathcal{A}_{s, p}$ for every
  $0 < s < 1$ and every $p > 1$. Indeed, 
  we compute
  \[ \frac{\varepsilon p}{|\mathbb{S}^{N - 1} |}  \int_{\mathbb{R}^N} \frac{1
     \wedge |z|^{sp}}{|z|^{N + \varepsilon p}} \hspace{0.17em} \mathrm{d} z =
     \frac{s}{s - \varepsilon} \xrightarrow{\varepsilon \to 0} 1. \]
\end{remark}

The main result of this section is Theorem~\ref{3-Thm2}, stated below. In it, we show that when the analysis is restricted to kernels in $\mathcal{A}_{s,p}$, the implication in Theorem~\ref{thm:MS-frac} can be sharpened to an equivalence.
 This gives a precise characterization of those kernels in $\mathcal{A}_{s, p}$ for which the MS formula holds: they are exactly the kernels that satisfy the uniform moment conditions. Before moving further in this direction, let us clarify the relationships among the concepts introduced above.
Our first observation is that the uniform moment conditions are strictly stronger than simple membership in the admissible class $\mathcal{A}_{s,p}$.
The following result makes this statement precise:

\begin{lemma}
  If a family of kernels $\{\rho_{\varepsilon} \}_{\varepsilon > 0}$ satisfies
  the two uniform moment conditions, then it necessarily belongs to the class
  of admissible fractional kernels $\mathcal{A}_{s, p}$.
\end{lemma}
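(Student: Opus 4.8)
The plan is to decompose the integral defining the admissibility normalization at the unit sphere and then invoke the two uniform moment conditions with the specific choice $R = 1$. First, observe that for $|z| < 1$ one has $|z|^{sp} < 1$, hence $1 \wedge |z|^{sp} = |z|^{sp}$, whereas for $|z| \geqslant 1$ one has $1 \wedge |z|^{sp} = 1$. Since the sphere $\{|z| = 1\}$ is Lebesgue-null (and the kernels are integrated against Lebesgue measure), it is immaterial whether it is assigned to the inner or the outer region, so we may write
\[
\int_{\R^N} (1 \wedge |z|^{sp}) \rho_{\varepsilon}(z)\, \d z = \int_{|z| < 1} |z|^{sp} \rho_{\varepsilon}(z)\, \d z + \int_{|z| > 1} \rho_{\varepsilon}(z)\, \d z.
\]

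Next, I would apply the short-range attenuation condition \eqref{eq:sratten} with $R = 1$ to conclude that the first summand on the right-hand side tends to $0$ as $\varepsilon \to 0$, and the mass-escape condition \eqref{eq:massescape} with $R = 1$ to conclude that the second summand tends to $1$ as $\varepsilon \to 0$. Adding these two limits gives
\[
\lim_{\varepsilon \to 0} \int_{\R^N} (1 \wedge |z|^{sp}) \rho_{\varepsilon}(z)\, \d z = 0 + 1 = 1,
\]
which is exactly the normalization condition in Definition~\ref{classA} characterizing membership in $\mathcal{A}_{s,p}$.

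I do not expect any genuine obstacle here: once the splitting radius is correctly chosen to be $R = 1$ — the value at which the two branches of $z \mapsto 1 \wedge |z|^{sp}$ agree — the conclusion is an immediate consequence of the definitions. The only point requiring a (harmless) word of care is that the dividing sphere carries no Lebesgue mass, so the strict versus non-strict inequalities appearing in the two uniform moment conditions can be used interchangeably.
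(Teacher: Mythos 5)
Your proof is correct and follows exactly the same route as the paper: split the admissibility integral at the unit sphere, apply the short-range attenuation condition with $R=1$ to the inner piece and the mass-escape condition with $R=1$ to the outer piece, and add the limits. Your additional remark that the sphere $\{|z|=1\}$ is Lebesgue-null is a harmless clarification the paper leaves implicit.
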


\begin{proof}
To establish membership in $\mathcal{A}_{s, p}$, we
must evaluate the limit of the admissibility integral. We decompose this
integral over the regions $|z| < 1$ and $|z| \geqslant 1$:
\[ \int_{\R^N} (1 \wedge |z|^{sp}) \rho_{\varepsilon} (z) \hspace{0.17em} \mathrm{d} z =
   \int_{|z| < 1} |z|^{sp} \rho_{\varepsilon} (z) \hspace{0.17em} \mathrm{d} z + \int_{|z| \geqslant
   1} \rho_{\varepsilon} (z) \hspace{0.17em} \mathrm{d} z. \]
We take the limit as $\varepsilon \to 0$ and apply the two moment conditions
with the specific radius $R = 1$. The first term vanishes by the
short-range attenuation condition:
\[ \lim_{\varepsilon \to 0}  \int_{|z| < 1} |z|^{sp} \rho_{\varepsilon} (z)
   \hspace{0.17em} \mathrm{d} z = 0. \]
The second term converges to 1 by the mass escape condition:
\[ \lim_{\varepsilon \to 0}  \int_{|z| \geqslant 1} \rho_{\varepsilon} (z)
   \hspace{0.17em} \mathrm{d} z = 1. \]
Summing the limits, we get that the family of kernels belongs to
$\mathcal{A}_{s, p}$.
\end{proof}

\begin{remark}[The reverse implication does not hold]
A family of kernels may be admissible ($\in \mathcal{A}_{s,p}$) without fulfilling the moment conditions. To illustrate this, it is enough to consider a kernel family whose mass is concentrated at the origin.
  Precisely, let $\phi \geqslant 0$ be a $C^{\infty}_c (\R^N)$ function with
  $\mathrm{supp}\, \phi \subset B (0, 1)$ and normalized such that
  \[ \int_{B (0, 1)} |w|^{sp} \phi (w) \hspace{0.17em} \mathrm{d} w = 1. \]
  For every $\varepsilon > 0$, define $\rho_{\varepsilon} (z) :=
  \varepsilon^{- N - sp} \phi (z / \varepsilon)$. The support of this kernel
  is $\mathrm{supp}\, \rho_{\varepsilon} \subseteq B (0, \varepsilon)$.
     By a change of variables $z = \varepsilon w$, we can verify the
  admissibility condition:
  \begin{align*}
    \lim_{\varepsilon \to 0}  \int_{\R^N} (1 \wedge |z|^{sp})
    \rho_{\varepsilon} (z) \hspace{0.17em} \mathrm{d} z & = \lim_{\varepsilon \to 0} 
    \int_{|z| < \varepsilon} |z|^{sp} \varepsilon^{- N - sp} \phi \left(
    \frac{z}{\varepsilon} \right) \hspace{0.17em} \mathrm{d} z\\
    & = \lim_{\varepsilon \to 0}  \int_{|w| < 1} | \varepsilon w|^{sp}
    \varepsilon^{- N - sp} \phi (w) (\varepsilon^N \hspace{0.17em} \mathrm{d} w)\\
    & = \int_{|w| < 1} |w|^{sp} \phi (w) \hspace{0.17em} \mathrm{d} w = 1.
  \end{align*}
  So, $\{\rho_{\varepsilon} \}$ belongs to $\mathcal{A}_{s, p}$. However, the
  uniform moment conditions fail. For any fixed $R > 0$, we can choose
  $\varepsilon < R$. Then the support of $\rho_{\varepsilon}$ is entirely
  contained within $B (0, R)$. This means:
  \[ \int_{|z| > R} \rho_{\varepsilon} (z) \hspace{0.17em} \mathrm{d} z = 0 \quad \text{for
     all } \varepsilon < R. \]
  The limit is 0, which violates the mass-escape condition ($\lim \to 1$).
\end{remark}

On the other hand, if $\{ \rho_{\varepsilon} \}_{\varepsilon > 0}$ is in the
class $\mathcal{A}_{s, p}$, then the uniform moment conditions become
equivalent.

\begin{lemma}\label{lemma:equivconds}
  Let $\{ \rho_{\varepsilon} \}$ be a family of non-negative measurable
  kernels in $\R^N$. If the family belongs to the class 
  $\mathcal{A}_{s, p}$ then the uniform moment conditions \eqref{eq:massescape} and \eqref{eq:sratten} are equivalent.
\end{lemma}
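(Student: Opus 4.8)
The plan is to use the admissibility normalization $\lim_{\varepsilon\to 0}\int_{\R^N}(1\wedge|z|^{sp})\rho_\varepsilon(z)\,\d z=1$ as a bridge linking the two moment conditions, splitting every integral at the threshold radius $|z|=1$, where the weight $1\wedge|z|^{sp}$ changes character. Writing $\mu_\varepsilon:=\int_{\R^N}(1\wedge|z|^{sp})\rho_\varepsilon(z)\,\d z$, I would begin from the elementary identity
\[
\mu_\varepsilon=\int_{|z|<1}|z|^{sp}\rho_\varepsilon(z)\,\d z+\int_{|z|\geqslant 1}\rho_\varepsilon(z)\,\d z,
\]
together with $\mu_\varepsilon\to 1$ (note that this already forces $\int_{|z|<1}|z|^{sp}\rho_\varepsilon\,\d z<\infty$ and $\int_{|z|\geqslant1}\rho_\varepsilon\,\d z<\infty$ for $\varepsilon$ small, so no extra integrability is needed). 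In particular the $R=1$ instance of \eqref{eq:sratten} and the $R=1$ instance of \eqref{eq:massescape} are each equivalent, via this splitting, to the vanishing of the complementary term, hence to one another; the real work is to promote this to arbitrary radii.

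For the direction \eqref{eq:sratten}$\Rightarrow$\eqref{eq:massescape}, I would first obtain $\int_{|z|\geqslant1}\rho_\varepsilon\,\d z\to1$ as above, then fix $R>0$ and distinguish two cases. If $R\geqslant1$, I would write $\int_{|z|>R}\rho_\varepsilon=\int_{|z|\geqslant1}\rho_\varepsilon-\int_{1\leqslant|z|\leqslant R}\rho_\varepsilon$ and control the subtracted annular term by $\int_{1\leqslant|z|\leqslant R}|z|^{sp}\rho_\varepsilon\leqslant\int_{|z|<R}|z|^{sp}\rho_\varepsilon\to0$, using $|z|^{sp}\geqslant1$ on $\{|z|\geqslant1\}$. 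If $R<1$, I would instead peel off $\int_{R<|z|<1}\rho_\varepsilon\leqslant R^{-sp}\int_{|z|<1}|z|^{sp}\rho_\varepsilon\to0$. For the converse \eqref{eq:massescape}$\Rightarrow$\eqref{eq:sratten}, the $R=1$ case gives $\int_{|z|<1}|z|^{sp}\rho_\varepsilon\to0$ from the splitting, which handles all $R\leqslant1$ by monotonicity in the radius; for $R>1$ I would bound $\int_{1\leqslant|z|<R}|z|^{sp}\rho_\varepsilon\leqslant R^{sp}\int_{1\leqslant|z|<R}\rho_\varepsilon=R^{sp}\bigl(\int_{|z|\geqslant1}\rho_\varepsilon-\int_{|z|\geqslant R}\rho_\varepsilon\bigr)\to R^{sp}(1-1)=0$, applying \eqref{eq:massescape} at radii $1$ and $R$, and then add back the $\{|z|<1\}$ contribution, which already tends to $0$.

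I do not anticipate any genuine obstacle: the whole argument is a finite manipulation of non-negative integrals over the regions $\{|z|<1\}$, $\{1\leqslant|z|\leqslant R\}$ and $\{|z|>R\}$, with monotonicity in $R$ supplying the rest. The only point demanding a little bookkeeping is to treat the cases $R\leqslant1$ and $R>1$ separately, since the two branches of $1\wedge|z|^{sp}$ trade places exactly at $|z|=1$; everything else is routine.
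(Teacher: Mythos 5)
Your proof is correct and follows essentially the same route as the paper: both use the admissibility normalization as a bridge by splitting at the threshold $|z|=1$ (where $1\wedge|z|^{sp}$ changes branch), extract the $R=1$ case from that decomposition, and then extend to all radii via the elementary bound $\rho_\varepsilon \leqslant R^{-sp}|z|^{sp}\rho_\varepsilon$ on the annulus $\{R<|z|<1\}$ and $|z|^{sp}\rho_\varepsilon\leqslant R^{sp}\rho_\varepsilon$ on $\{1\leqslant|z|<R\}$. The only stylistic difference is that you spell out the case split $R\leqslant 1$ versus $R>1$ in both directions, whereas the paper reduces to $R>1$ (resp.\ $R<1$) and relies on the auxiliary annular estimates \eqref{eq:adcond1.5}, \eqref{eq:adcond1.52} more implicitly; the content is the same.
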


\begin{proof}
  By assumption, the family of kernels is admissible, i.e.,
  \begin{equation}
    1 = \lim_{\varepsilon \rightarrow 0} \int_{\R^N} (1 \wedge | z |^{sp})
    \rho_{\varepsilon} (z) \hspace{0.17em} \mathrm{d} z. \label{eq:adcond1}
  \end{equation}
  (\eqref{eq:massescape}  $\Rightarrow$ \ref{eq:sratten}). The Mass Escape condition
  \eqref{eq:massescape} implies that for any $0 < R_1 < R_2$ we have
  \begin{equation}
    \lim_{\varepsilon \rightarrow 0} \int_{R_1 < |z| < R_2} \rho_{\varepsilon}
    (z) \hspace{0.17em} \mathrm{d} z = \lim_{\varepsilon \rightarrow 0} \int_{| z | > R_1}
    \rho_{\varepsilon} (z) - \lim_{\varepsilon \rightarrow 0}  \int_{| z | >
    R_2} \rho_{\varepsilon} (z) \hspace{0.17em} \mathrm{d} z = 0. \label{eq:adcond1.5}
  \end{equation}
  In particular,
  \begin{equation}
    \lim_{\varepsilon \rightarrow 0} \int_{1 < |z| < R} |z|^{sp}
    \rho_{\varepsilon} (z) \hspace{0.17em} \mathrm{d} z \leqslant R^{s p} \lim_{\varepsilon
    \rightarrow 0} \int_{1 < |z| < R} \rho_{\varepsilon} (z) \hspace{0.17em} \mathrm{d} z = 0.
    \label{eq:adcond2}
  \end{equation}
  Now, observe that it is sufficient to prove that \eqref{eq:sratten} holds for
  every $R > 1$. But for $R > 1$ we have
  \begin{eqnarray}
    \int_{|z| < R} |z|^{sp} \rho_{\varepsilon} (z) \hspace{0.17em} \mathrm{d} z & = &
    \int_{|z| < 1} (1 \wedge | z |^{sp}) \rho_{\varepsilon} (z) \hspace{0.17em} \mathrm{d} z +
    \int_{1 < |z| < R} |z|^{sp} \rho_{\varepsilon} (z) \hspace{0.17em} \mathrm{d} z
    \nonumber\\
    & = & \int_{\R^N} (1 \wedge | z |^{sp}) \rho_{\varepsilon} (z) \hspace{0.17em} \mathrm{d}
    z - \int_{|z| > 1} \rho_{\varepsilon} (z) \hspace{0.17em} \mathrm{d} z + \int_{1 < |z| <
    R} |z|^{sp} \rho_{\varepsilon} (z) \hspace{0.17em} \mathrm{d} z, \nonumber
  \end{eqnarray}
  from which, taking into account \eqref{eq:adcond1}, \eqref{eq:adcond2}, and
  condition \eqref{eq:massescape}  applied with $R = 1$, passing to the limit for
  $\varepsilon \rightarrow 0$ we get condition~\eqref{eq:sratten}.
  
  \smallskip
  
 \noindent (\eqref{eq:sratten} $\Rightarrow$ \eqref{eq:massescape}). The Short-Range Attenuation
  condition \eqref{eq:sratten} implies that for any $0 < R_1 < R_2$ we have
  \begin{equation}
    \lim_{\varepsilon \rightarrow 0} \int_{R_1 < |z| < R_2} |z|^{sp}
    \rho_{\varepsilon} (z) \hspace{0.17em} \mathrm{d}z = \lim_{\varepsilon \rightarrow 0}
    \int_{| z | < R_2} |z|^{sp} \rho_{\varepsilon} (z) - \lim_{\varepsilon
    \rightarrow 0}  \int_{| z | < R_1} |z|^{sp} \rho_{\varepsilon} (z)
    \hspace{0.17em} \mathrm{d} z = 0. \label{eq:adcond1.52}
  \end{equation}
  In particular, for any $R < 1$ we have
  \begin{equation}
    \lim_{\varepsilon \rightarrow 0} \int_{R < |z| < 1} \rho_{\varepsilon} (z)
   \hspace{0.17em} \mathrm{d} z \leqslant R^{- s p} \lim_{\varepsilon \rightarrow 0} \int_{R
    < |z| < 1}  | z |^{s p} \rho_{\varepsilon} (z) \hspace{0.17em} \mathrm{d} z = 0.
    \label{eq:adcond22}
  \end{equation}
  Now, observe that it is sufficient to prove that \eqref{eq:massescape}  holds for
  every $R < 1$. But for $R < 1$ we have
  \begin{eqnarray}
    \int_{|z| > R} \rho_{\varepsilon} (z) \hspace{0.17em} \mathrm{d} z & = & \int_{R < |z| <
    1} \rho_{\varepsilon} (z) \hspace{0.17em} \mathrm{d} z + \int_{|z| > 1} (1 \wedge | z
    |^{sp}) \rho_{\varepsilon} (z) \hspace{0.17em} \mathrm{d} z \nonumber\\
    & = & \int_{R < |z| < 1} \rho_{\varepsilon} (z) \hspace{0.17em} \mathrm{d} z +
    \int_{\R^N} (1 \wedge | z |^{sp}) \rho_{\varepsilon} (z) \hspace{0.17em} \mathrm{d} z -
    \int_{|z| < 1} | z |^{sp} \rho_{\varepsilon} (z) \hspace{0.17em} \mathrm{d} z, \nonumber
  \end{eqnarray}
  from which, taking into account \eqref{eq:adcond1}, \eqref{eq:adcond22}, and
  condition \eqref{eq:sratten} applied with $R = 1$, passing to the limit for
  $\varepsilon \rightarrow 0$ we get condition~\eqref{eq:massescape} .
\end{proof}

We can now state and prove the main result of this section.

\begin{theorem}[Conditional fractional Sobolev setting]
  \label{3-Thm2}Let $p \in [1, \infty)$ and $s \in (0, 1)$. Assume that
  $\{\rho_{\varepsilon} \} \subset \mathcal{A}_{s, p}$. The following three
  statements are equivalent:
  \begin{enumerate}
    \item  \label{scond:R-fix-Sob} \text{{\bfseries{(Uniform moment conditions)}}} For every fixed
    radius $R > 0$, as $\varepsilon \to 0$ the kernels satisfy
    \begin{equation}
      \label{eq:kernels-R_fix-s} \lim_{\varepsilon \to 0}  \int_{|z| > R}
      \rho_{\varepsilon} (z) \hspace{0.17em} \mathrm{d} z = 1 \quad \text{and}
      \quad \lim_{\varepsilon \to 0}  \int_{|z| < R} |z|^{sp}
      \rho_{\varepsilon} (z) \hspace{0.17em} \mathrm{d} z = 0.
    \end{equation}
    \item \label{cond:R-lim-s} \text{{\bfseries{(Iterated-limits conditions)}}} The iterated limits
    taken in the order ``first $\varepsilon \to 0$, then $R \to \infty$''
    satisfy
    \begin{equation}
      \label{eq:kernels-R_lim-s} \lim_{R \to \infty} \lim_{\varepsilon \to 0} 
      \int_{|z| > R} \rho_{\varepsilon} (z) \hspace{0.17em} \mathrm{d} z = 1
      \quad \text{and} \quad \lim_{R \to \infty} \lim_{\varepsilon \to 0} 
      \int_{|z| < R} |z|^{sp} \rho_{\varepsilon} (z) \hspace{0.17em}
      \mathrm{d} z = 0.
    \end{equation}
    \item\label{seq:MS+II-s}  \text{{\bfseries{(Maz'ya--Shaposhnikova formula in $W^{s, p}
    (\mathbb{R}^N)$)}}} For every $u \in W^{s, p} (\mathbb{R}^N)$ the nonlocal
    energy $\mathcal{F}_{\varepsilon} (u)$ is well-defined for all
    sufficiently small $\varepsilon$, and
    \begin{equation}
      \label{seq:MS-s} \lim_{\varepsilon \to 0} \mathcal{F}_{\varepsilon} (u)
      = 2 \|u\|^p_{L^p (\mathbb{R}^N)} .
    \end{equation}
    Moreover, for each fixed $R > 0$,
    \begin{equation}
      \label{eq:II-s} \lim_{\varepsilon \to 0}  \int_{|z| < R}
      \int_{\mathbb{R}^N} \rho_{\varepsilon} (z)  \hspace{0.17em} |u (x + z) -
      u (x) |^p \hspace{0.17em} \mathrm{d} x \hspace{0.17em} \mathrm{d} z = 0.
    \end{equation}
  \end{enumerate}
\end{theorem}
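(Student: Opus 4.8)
The plan is to prove the cyclic chain $(\ref{scond:R-fix-Sob}) \Rightarrow (\ref{cond:R-lim-s}) \Rightarrow (\ref{seq:MS+II-s}) \Rightarrow (\ref{scond:R-fix-Sob})$, recycling the machinery already assembled for the fractional case. The first implication is trivial: if the fixed-radius limits \eqref{eq:kernels-R_fix-s} hold for every $R > 0$, then the inner limits in \eqref{eq:kernels-R_lim-s} do not depend on $R$, so the outer limit $R \to \infty$ changes nothing. The second implication is precisely Theorem~\ref{thm:MS-frac}: its hypothesis \eqref{eq:kernels-R_lim-frac} is verbatim \eqref{eq:kernels-R_lim-s}, hence that theorem yields both \eqref{seq:MS-s} and the short-range vanishing \eqref{eq:II-s} for every $u \in W^{s,p}(\R^N)$, while the well-posedness of $\mathcal{F}_\varepsilon(u)$ for small $\varepsilon$ is guaranteed by Lemma~\ref{lemma:inclusion-frac} under the same hypothesis. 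Note that neither of these two implications uses the admissibility assumption $\{\rho_\varepsilon\} \subset \mathcal{A}_{s,p}$.

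The substantive step is $(\ref{seq:MS+II-s}) \Rightarrow (\ref{scond:R-fix-Sob})$, and this is where admissibility enters. Fix $R > 0$ and pick any non-zero $u \in C^\infty_c(\R^N)$ with $\supp u \subset B_{R/2}(0)$; since $C^\infty_c(\R^N) \subset W^{s,p}(\R^N)$, statement (\ref{seq:MS+II-s}) applies to this $u$. Whenever $|z| > R$ the supports of $u$ and $\tau_z u$ are disjoint, hence $\int_{\R^N}|u(x+z)-u(x)|^p\,\d x = 2\|u\|_{L^p}^p$, and the decomposition \eqref{eq:decomp} collapses to
\[
\mathcal{F}_\varepsilon(u) = 2\,\|u\|_{L^p}^p \int_{|z|>R}\rho_\varepsilon(z)\,\d z + \II_{\varepsilon,R}[u].
\]
Letting $\varepsilon \to 0$ and invoking $\mathcal{F}_\varepsilon(u) \to 2\|u\|_{L^p}^p$ together with $\II_{\varepsilon,R}[u] \to 0$ (both from (\ref{seq:MS+II-s})), and dividing by $2\|u\|_{L^p}^p > 0$, we obtain $\lim_{\varepsilon\to 0}\int_{|z|>R}\rho_\varepsilon(z)\,\d z = 1$; since $R > 0$ was arbitrary, the mass-escape condition \eqref{eq:massescape} holds. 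Because $\{\rho_\varepsilon\} \subset \mathcal{A}_{s,p}$, Lemma~\ref{lemma:equivconds} then upgrades mass escape to the short-range attenuation \eqref{eq:sratten}, and $(\ref{scond:R-fix-Sob})$ follows.

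The step I expect to be delicate — and the reason the hypothesis $\{\rho_\varepsilon\} \subset \mathcal{A}_{s,p}$ is imposed at all — is the recovery of the short-range attenuation \eqref{eq:sratten}. In the smooth and integer-order settings this came out of the sharp two-sided equivalence in Lemma~\ref{lemma:(ii)-smooth}, whose nontrivial direction rests on a first-order Taylor expansion and therefore produces the weight $|z|^p$, in harmony with $\|\tau_z u - u\|_{L^p}^p \asymp |z|^p$ for smooth $u$. In $W^{s,p}(\R^N)$ no such harmony is available: smooth test functions still give $\|\tau_z u - u\|_{L^p}^p$ of order $|z|^p$, not $|z|^{sp}$, so one cannot read off $\int_{|z|<R}|z|^{sp}\rho_\varepsilon$ directly from $\lim_\varepsilon \II_{\varepsilon,R}[u]=0$; doing so would require non-smooth test functions carrying a sharp $s$-Hölder-type lower bound $\|\tau_z u - u\|_{L^p}^p \gtrsim |z|^{sp}$ near the origin, which is exactly the gap flagged in Remark~\ref{rmk:genuinegen}. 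The plan sidesteps this entirely: it extracts only the mass-escape condition from the disjoint-support test functions — an argument insensitive to $s$ — and then converts it into short-range attenuation through the normalization encoded in $\mathcal{A}_{s,p}$, via Lemma~\ref{lemma:equivconds}.
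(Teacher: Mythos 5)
Your proof is correct and follows essentially the same route as the paper: the first two implications are inherited from Theorem~\ref{thm:MS-smooth} and Theorem~\ref{thm:MS-frac}, and the substantive step (\ref{seq:MS+II-s})~$\Rightarrow$~(\ref{scond:R-fix-Sob}) extracts the mass-escape condition from compactly supported smooth test functions and then uses admissibility via Lemma~\ref{lemma:equivconds} to upgrade it to the $|z|^{sp}$ short-range attenuation. The only cosmetic difference is that the paper invokes Theorem~\ref{thm:MS-smooth} directly to obtain mass escape, while you re-derive it inline from the disjoint-support computation — the underlying argument is identical, and your closing discussion correctly identifies why the admissibility hypothesis is what makes the converse work in the fractional setting.
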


\noindent\textbf{Proof\ }The implication (\ref{scond:R-fix-Sob} $\Rightarrow$
\ref{cond:R-lim-s}) follows in the same way as the analogous implication in
Theorem \ref{thm:MS-smooth}, while the implication (\ref{cond:R-lim-s}
$\Rightarrow$ \ref{seq:MS+II-s}) is an immediate consequence of Theorem
\ref{thm:MS-frac}. Thus, it suffices to prove the converse implication
(\ref{seq:MS+II-s} $\Rightarrow$ \ref{scond:R-fix-Sob}).

Assume that \eqref{seq:MS-s} and \eqref{eq:II-s} hold. Since $C_c^{\infty}
(\mathbb{R}^N) \subset W^{s, p} (\mathbb{R}^N)$, by
Theorem~\ref{thm:MS-smooth}, the first relation in \eqref{eq:kernels-R_fix-s}
already holds; that is, for every $R > 0$ one has
\begin{equation}
  \label{eq:mass_at_infty} \lim_{\varepsilon \to 0}  \int_{|z| > R}
  \rho_{\varepsilon} (z) \hspace{0.17em} \mathrm{d} z = 1.
\end{equation}
But then, Lemma~\ref{lemma:equivconds}
completes the proof of the implication \ (\ref{seq:MS+II-s} $\Rightarrow$
\ref{scond:R-fix-Sob}), and hence of the
theorem.\hspace*{\fill}$\Box$\medskip

\section*{Acknowledgments}
This project was initiated during a visit of A.P.\ at the Institut für Analysis und Scientific Computing, TU Wien, whose excellent working conditions are warmly acknowledged.  

\noindent The research of E.D.\ was supported by the Austrian Science Fund (FWF) through projects \href{https://www.doi.org/10.55776/F65}{10.55776/F65}, \href{https://www.doi.org/10.55776/Y1292}{10.55776/Y1292}, \href{https://www.doi.org/10.55776/P35359}{10.55776/P35359}, and \href{https://www.doi.org/10.55776/F100800}{10.55776/F100800}.  

\noindent The work of \textsc{G.~Di F.} was partially supported by the Italian Ministry of Education and Research through the PRIN2022 project \emph{Variational Analysis of Complex Systems in Material Science, Physics and Biology} (No.~2022HKBF5C).  

\noindent The research of R.G.\ was funded by the Austrian Science Fund (FWF) through project \href{https://www.doi.org/10.55776/P34609}{10.55776/P34609}. R.G.\ gratefully acknowledges the support and hospitality of TU Wien and MedUni Wien.  

\noindent The research of A.P.\ was partially supported by the GNAMPA project \emph{Structures of sub-Riemannian hypersurfaces in Heisenberg groups}. Further support was provided by MIUR and the University of Trento (Italy) through the PRIN project \emph{Regularity problems in Sub-Riemannian structures} (MUR-PRIN 2022, Project code: 2022F4F2LH). For open-access purposes, the authors have applied a CC BY public copyright license to any author-accepted manuscript version arising from this submission.  

\noindent \textsc{G.~Di F.} and A.P.\ are members of the \emph{Gruppo Nazionale per l’Analisi Matematica, la Probabilità e le loro Applicazioni} (GNAMPA), which is part of the \emph{Istituto Nazionale di Alta Matematica} (INdAM).

\medskip
\noindent \small{\textbf{Conflicts of Interest}. On behalf of all authors, the corresponding author states that there is no conflict of interest. { Data sharing not applicable to this article as no datasets were generated or analyzed during the current study}


\bibliographystyle{siam}
\bibliography{Bib}

\end{document}